\newtheorem*{theorem*}{Theorem}
\newtheorem{lemma}{Lemma}[section]
\newtheorem{theorem}[lemma]{Theorem}
\newtheorem{proposition}{Proposition}
\newtheorem{defn}[lemma]{Definition}
\theoremstyle{definition}
\newcommand{\mymath}[1]{\newline \centerline{$\displaystyle{#1}$}}
\def\floor#1{\lfloor {#1} \rfloor}
\def\ceil#1{\lceil {#1} \rceil}
\newcommand\defeq{\mathrel{\overset{\makebox[0pt]{\mbox{\normalfont\tiny\sffamily def}}}{=}}}
\def\ind{\mathbbm{1}}
\newcommand\Var[2][]{
\ensuremath{\mathrm{\mathbf{Var}}_{#1} 
\pmb{\left[\vphantom{#2}\right.}
{#2}
\pmb{\left.\vphantom{#2}\right]}
}}
\newcommand\E[2][]{
\ensuremath{\mathrm{\mathbf{E}}_{#1} 
\pmb{\left[\vphantom{#2}\right.}
{#2}
\pmb{\left.\vphantom{#2}\right]}
}}
\renewcommand\Pr[2][]{
\ensuremath{\mathrm{\mathbf{Pr}}_{#1} 
\pmb{\left[\vphantom{#2}\right.}
{#2}
\pmb{\left.\vphantom{#2}\right]}
}}
\def\Poi{\ensuremath{\mathrm{Poi}}}
\def\accept{{\fontfamily{cmss}\selectfont accept}\xspace}
\def\reject{{\fontfamily{cmss}\selectfont reject}\xspace}
\def\BB{\mathcal{B}}
\def\CC{\mathcal{C}}
\def\DD{\mathcal{D}}
\def\FF{\mathcal{F}}
\def\II{\mathcal{I}}
\def\MM{\mathcal{M}}
\def\PP{\Pi}
\def\UU{\mathcal{U}}
\def\vv{v}
\def\poi{\ensuremath{\mbox{\sl Poi}}}
\newif\ifobviousProofset
\newcommand{\obviousProof}[1]{}
\renewcommand{\obviousProof}[1]{
  \newline
  \textcolor{blue}{
  	\textbf {Proof for our own record:} #1
  	{\hspace*{\fill}$\Box$}
  	\newline
  }
}
\newcommand*{\rom}[1]{\expandafter\@slowromancap\romannumeral #1@}
\newcommand{\inCOLT}[2]{#2}
\begin{document}
\title{Testing Mixtures of Discrete Distributions}

\author{
Maryam Aliakbarpour\thanks{MA is supported by funds from the MIT-IBM Watson AI Lab (Agreement No. W1771646),  the NSF grants IIS-1741137, and CCF-1733808.}\\
CSAIL, MIT\\
\texttt{maryama@mit.edu} \\
\and
Ravi Kumar\\
Google \\
\texttt{ravi.k53@gmail.com}
\and
Ronitt Rubinfeld \thanks{RR is supported 
by funds from the MIT-IBM Watson AI Lab (Agreement No. W1771646)
the NSF grants CCF-1650733, CCF-1733808, IIS-1741137 and CCF-1740751.}\\
CSAIL, MIT, TAU\\
\texttt{ronitt@csail.mit.edu} \\
}

\date{}
\maketitle

\begin{abstract}
There has been significant study on the sample complexity of testing properties of distributions over large domains.  For many properties, it is known that the sample complexity can be substantially smaller than the domain size. For example, over a domain of size $n$, distinguishing the uniform distribution from distributions that are far from uniform in $\ell_1$-distance uses only $O(\sqrt{n})$ samples. 

However, the picture is very different in the presence of arbitrary noise, even when the amount of noise is quite small.  In this case, one must distinguish if samples are coming from a distribution
that is $\epsilon$-close to uniform from the case where the distribution is $(1-\epsilon)$-far from uniform.  The latter task requires nearly linear in $n$ samples~\cite{Valiant08,VV11}.

In this work, we present a noise model that on one hand is more tractable for the testing problem, and on the other hand represents a rich class of noise families.   In our model, the noisy distribution is a mixture of the original distribution and noise, where the latter is known to the tester either explicitly or via sample access; the form of the noise is also known \emph{a priori}.  Focusing on the identity and closeness testing problems leads to the following mixture testing question:  Given samples of distributions $p, q_1,q_2$, can we test if $p$ is a mixture of $q_1$ and $q_2$?  We consider this general question in various scenarios that differ in terms of how the tester can access the distributions, and show that indeed this problem is more tractable. 
Our results  show that the sample complexity of our testers are exactly the same as for the classical non-mixture case. 
\end{abstract}

\newcommand{\coarse}[1]{\langle #1 \rangle}
\newcommand{\calB}{\mathcal{B}}
\newcommand{\tuple}[1]{\langle #1 \rangle}

\section{Introduction}

Distribution testing~\cite{BFRSW} has been studied extensively for the past many years~(see~\cite{canonne2015survey} for a survey).  In the vanilla version, the problem is to quickly test if a discrete distribution has a certain property or is statistically far from any distribution with that property.  The tester has access to samples from the distribution and strives to be as frugal as possible in the number of samples it uses.  Many statistical properties, including various distances between distributions, are well understood in this model.  There have been several relaxations to the basic testing model including tolerant testing (where the tester should also accept if the distribution is close to having a property), the conditional samples model (where the tester can access the distribution conditioned on a specified subset), making stylized assumptions about the distribution (monotone, sparse support, high-dimensional, etc), and so on.  In each of these works, the aim has been to push the boundaries of our understanding: when do sample-efficient testers exist?  Here, by sample-efficient, we mean the number of samples should be sub-linear in the domain size.

There are many scenarios in which a distribution is observed along with noise; in some cases, even the form of the noise is known \emph{a priori}.  
One such scenario is  the so-called {\em identity testing} problem in which the tester has a known (explicitly specified) distribution and its goal is to check if a given distribution, available as samples, is close to the known distribution.  For example, assume that the distribution of the top million queries to a web search engine is known in advance.  Then, identity testing would be a quick way to check how close the daily query distribution is to this known distribution.  However, in reality, there are natural minor variations to the daily query distribution, which may cause the identity tester to fail.  This is clearly undesirable.  

An option to tackle the noise would be to use testers that are {\em tolerant} to noise.  Unfortunately, even simple versions of tolerant testers are faced with near-linear lower bounds on the sample complexity, making this option uninteresting.  For example, one can distinguish if a distribution on a domain of size $n$ is uniform or far from uniform in $\ell_1$-distance using $\Theta(\sqrt{n})$ samples~\cite{Paninski:08}. However, an algorithm that distinguishes between near-uniform distributions and  distributions that are far from uniform requires $\Omega(n/\log n)$ samples~\cite{Valiant08,ValiantV14}.  Hence, to achieve sub-linear sample complexity, we need more judicious, stylized assumptions about the noise---how it is available to the tester and if it is adversarial.

A different yet natural way to model the above scenario is to view it as a mixture of distributions.
In the above example, one of the components of the mixture can be interpreted as the signal and the other component can be thought of as the noise.
More generally, the tester is given the components of a mixture of two distributions.   However, it does not know the mixing parameter, i.e., the magnitude of the contribution of each component to the mixture.  The mixture testing problem is then to test if a distribution is close to a mixture of two given distributions or is far from any manner in which the two distributions can be mixed.  As we will see, by making reasonable assumptions on the form of the noise and how it is available to the tester, the tolerant testing lower bounds can be circumvented and one can obtain testers with sub-linear sample complexity.

\paragraph{Main contributions.}
In this work, we consider distribution testing of mixtures of two distributions $q_1$ and $q_2$.  
For ease of exposition, let us call the first component $q_1$ the \emph{original distribution} and the second component $q_2$ the \emph{noise}, and let $[n] = \{1, \ldots, n\}$ be the domain of both $q_1$ and $q_2$.
The simplest version of our problem is: given sample access to distribution $p$, and for known distributions  $q_1,q_2$,  is $p = \alpha q_1 + (1- \alpha) q_2$ for some $\alpha$, or is $p$ far from $\alpha q_1 + (1 - \alpha) q_2$ for every $\alpha \in [0, 1]$?
Note that the tester is not given the mixture parameter $\alpha$.  
We further study the case when $q_1,q_2$ are not given explicitly
to the algorithm, as well as other generalizations.

We mainly focus on identity and closeness testing, which are two basic instances of hypothesis testing that have received much attention in the theory, machine learning, and statistics communities; see the works of~\cite{GGR98, BatuFFKRW, BFRSW, Batu01, BatuDKR02, BatuKR04, Paninski:08, Valiant08, GR00, ILR12, LRR13, DaskalakisDSV13, AcharyaJOS14c,   ChanDVV14, colt2, DBLP:journals/corr/DiakonikolasKN14, DiakonikolasKN15, ADK15, colt5, CDGR16, colt1, colt4, ValiantV14, nips1, nips2, colt3} and the surveys of~\cite{Rub12} and~\cite{canonne2015survey}.

The mixture testing problem has a more constrained model compared to the tolerant testing problem, so one might hope to bypass the existing lower bounds. However, the mixture testing problem can also run into near-linear sample complexity lower bounds if one does not provide the tester with sufficient access to the mixture components. 
Indeed, if the tester does not have access to the noise, we show the mixture testing problem becomes as hard as tolerant testing, necessitating $\Omega(n/\log n)$ samples (Theorem~\ref{thm:LB_mixture_no_info}).  
Hence, to show nontrivial positive results, 
the tester must have access to some kind of information 
about the noise.
We consider the following three cases for the noise, namely, (i) when the noise is given as an explicitly specified distribution, (ii) when the tester does not explicitly know the noise distribution,  but does have sample access to it, and (iii) when there is no explicit description or access to samples from the  noise distribution, but it is known that the noise distribution comes from a {\em class} of distributions, e.g., the set of  $k$-histogram distributions. 
For the first, we obtain a tester with sample complexity $\Theta(\sqrt{n}/\epsilon^2)$ and for the second, we obtain a tester with sample complexity $\Theta(\sqrt{n}/\epsilon^2 + n^{2/3}/\epsilon^{4/3})$  where $\epsilon$ is a given proximity parameter; these show that the complexity of our testers is exactly the same as for the classical non-mixture case.  For the third, when the noise is assumed to come from the set of $k$-histogram distributions, 
we obtain an identity tester that uses 
$\Tilde{O}(\sqrt{k n})$ samples.

\section{Preliminaries}

For the rest of the paper, we use the following notation.  For a distribution $p$ over $[n]$, we use $p(i)$ to denote the probability of element $i \in [n]$ and for a subset $S \subseteq [n]$, let $p(S) = \sum_{i \in S} p(i)$.   
We use $\|.\|_p$ to indicate the $\ell_p$-norm of a vector.
We typically use the $\ell_1$-distance and say $p$ and $q$ are \emph{$\epsilon$-close} if $\|p-q\|_1 < \epsilon$ and \emph{$\epsilon$-far} otherwise. Let $\UU_n$ denote the uniform distribution on $[n]$; 
we drop the subscript when the domain is clear from the context. 
Distribution $p$ is a {\em mixture} of $q_1$ and $q_2$ if there exists $\alpha \in [0,1]$ such that $p = (1-\alpha)\,q_1 + \alpha q_2$. 
We call $\alpha$ the {\em mixture parameter}. We use $q_\alpha$ to denote the mixture $(1-\alpha)q_1 + \alpha q_2$ when the components $q_1$ and $q_2$ are clear from the context.

\paragraph{Background.}

Through this paper we consider several {\em distribution testing} problems: 
For a given  {\em property} of distributions, 
we use $\PP$ to denote a set of distributions that satisfy 
the property. The distance of distribution $p$ to $\PP$  is the $\ell_1$-distance between $p$ and the closest  distribution $q$ in $\PP$. 
In a {\em distribution testing problem}, the goal is to distinguish whether $p$ is in $\PP$ or is $\epsilon$-far from $\PP$. 
We say an algorithm is a {\em tester for property $\PP$}  if the following is true with probability $2/3$. \footnote{The success probability of $2/3$ is arbitrary here. 
Given such tester, we can achieve a success probability of $1-\delta$, via standard amplification methods, at the cost of a $\log(1/\delta)$ multiplicative increase in the sample complexity.}
\begin{itemize}[nosep]
    \item Completeness: If $p$ is in $\PP$, then the algorithm outputs \accept. 
    \item Soundness: If $p$ is $\epsilon$-far from $\PP$, the algorithm outputs \reject. 
\end{itemize}
The  algorithm is an \emph{$(\epsilon', \epsilon)$-tolerant tester}, if it 
also satisfies the stronger completeness property that when $p$ 
is $\epsilon'$-close to some distribution in $\PP$, then the algorithm outputs \accept (with probability at least $2/3$).
These definitions can be extended to the case of properties of collections of more than one distribution. 
Although in the standard setting we receive samples from at least one distribution in the collection, the testing problems may be 
defined with respect to other methods of access.

We make one of the three following assumptions regarding the algorithm's view of the distributions: (i) The distribution is {\em explicitly given} or {\em known} if the algorithm knows the probability of each domain element under the distribution. 
(ii) The distribution is {\em given by samples} if the algorithm has access to an oracle that provides samples from the distribution.
(iii) The distribution is not known nor given by samples but is a member of a given class of distributions.

The term {\em  identity testing} is used to refer to the setting in which we test if a distribution, which we have sample access to, is equal to a known one.
Note that this is equivalent to testing property
$\PP = \{ q \}$.
The term {\em closeness testing } refers to the setting in which we test if two 
distributions, both available via samples, are equal or not; in this case, $\PP$ is the set of pairs of equal distributions. 

\paragraph{Mixture testing problems.}
Suppose $p$, $q_1$, and $q_2$ are distributions over $[n]$. 
Let 
$\PP_{q_1,q_2} \coloneqq \{(1-\alpha) \, q_1 + \alpha \, q_2 ~\mid~ \alpha \in [0,1] \}$ (we usually drop the subscripts $q_1,q_2$ when  
they are clear from context).
In a {\em mixture testing problem}, the goal is to distinguish
whether a distribution $p$  given via samples is in $\PP_{q_1,q_2}$
or $\epsilon$-far from any distribution in $\PP_{q_1,q_2}$ 
with probability at least 2/3.
We investigate the following problems, which differ in the way
that mixture testing algorithm can access $q_1,q_2$.  
Note however that the mixture parameter $\alpha$ is \emph{not} given
to the tester.
(i)  An algorithm is an {\em identity tester in the presence of known noise} 
if it solves the mixture testing problem when $q_1,q_2$ are
known to the tester.  
(ii) An algorithm is a 
{\em closeness tester in the presence of noise that is accessible via samples 
}
if it solves the mixture testing problem when $q_1,q_2$ are 
not explicitly given, but samples of each
are provided to the tester.
(iii) An algorithm is an
{\em identity tester in the presence of  class $\CC$-noise} if it can distinguish whether $p$ is a mixture of a known distribution $q_1$ and some $q_2 \in \CC$.   
Note that such an algorithm is a property tester for  $\PP \coloneqq \{(1-\alpha)\,  q_1 + \alpha \, q_2 ~\mid~ q_2 \in \CC \mbox{, and }\alpha \in [0,1] \}$.
%


Note that one can also define ``closeness testing in the presence
of known noise'', and ``identity testing in the presence of noise 
that is given via samples'',
but our lower bounds will show that the sample complexity of 
these tasks is the same as the sample complexity of 
closeness testing in the presence of noise that is given via samples.

\section{An overview of our results and techniques} 

\subsection{Testing identity in the presence of known noise} 

We first consider the problem of testing if distribution $p$, given
via samples, can be expressed as mixture of known distributions $q_1$ and $q_2$.  We show the following.

\begin{theorem} \label{thm:int_identity}
Given two known distributions $q_1$, $q_2$, 
and $\epsilon > 0$, 
there is an identity tester in the presence of known noise
that uses $ O(\sqrt n/\epsilon^2)$ samples.
Furthermore,  $\Omega(\sqrt n/\epsilon^2)$ samples are
required.
\end{theorem}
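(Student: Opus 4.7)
The lower bound follows by specializing to the case $q_1 = q_2$: then $\PP_{q_1,q_2} = \{q_1\}$ and mixture testing collapses to ordinary identity testing against $q_1$, which is known to require $\Omega(\sqrt n/\epsilon^2)$ samples (e.g., Paninski's bound in the case $q_1 = \UU$). For the upper bound my plan is a two-phase algorithm: first use a few cheaply drawn samples to pin down a good candidate $\bar\alpha$ for the mixing parameter, then invoke a standard identity tester against the now-known distribution $q_{\bar\alpha}$.

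For the estimation phase, assume $q_1 \ne q_2$ (if $q_1 = q_2$, just run identity testing against $q_1$ with parameter $\epsilon$). Let $S^+ \coloneqq \{i : q_1(i) > q_2(i)\}$, so that $q_1(S^+) - q_2(S^+) = \|q_1-q_2\|_1/2$; this set is known because $q_1,q_2$ are known explicitly. Draw $O(1/\epsilon^2)$ samples from $p$ and form the empirical estimate $\hat p(S^+)$; by a Hoeffding bound it lies within additive $\epsilon/16$ of $p(S^+)$ with probability at least $5/6$. Define
\[
\hat\alpha \;\coloneqq\; \frac{q_1(S^+) - \hat p(S^+)}{q_1(S^+) - q_2(S^+)}, \qquad \bar\alpha \;\coloneqq\; \min\bigl(1,\max(0,\hat\alpha)\bigr).
\]
The key algebraic fact is that if $p = q_{\alpha^*}$ for some $\alpha^* \in [0,1]$, a one-line calculation yields $\|q_{\hat\alpha} - q_{\alpha^*}\|_1 = |\hat\alpha - \alpha^*|\cdot\|q_1 - q_2\|_1 = 2\,|\hat p(S^+) - p(S^+)|$, which is conveniently \emph{independent} of $\|q_1-q_2\|_1$. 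Hence in the completeness case $\|p - q_{\bar\alpha}\|_1 \le \epsilon/8$ (projecting $\hat\alpha$ to $[0,1]$ only brings it closer to $\alpha^*$).

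In the testing phase, run an identity tester against the known distribution $q_{\bar\alpha}$ with parameter $\epsilon$, using $O(\sqrt n/\epsilon^2)$ additional samples. Since $q_{\bar\alpha} \in \PP$ by construction, soundness is immediate: if $p$ is $\epsilon$-far from $\PP$, then in particular $\|p - q_{\bar\alpha}\|_1 > \epsilon$ and the tester rejects. The main obstacle I anticipate is that a textbook identity tester is only required to accept when $p = q_{\bar\alpha}$, whereas here we only know $\|p - q_{\bar\alpha}\|_1 \le \epsilon/8$. This forces us to use a \emph{robust} identity tester that accepts whenever $\|p - q\|_1 \le \epsilon/c$ (for a sufficiently large constant $c$) and rejects when $\|p - q\|_1 > \epsilon$; such a tester with the same $O(\sqrt n/\epsilon^2)$ sample complexity is available from chi-squared based constructions in the identity-testing literature. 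The total sample complexity is then $O(1/\epsilon^2) + O(\sqrt n/\epsilon^2) = O(\sqrt n/\epsilon^2)$, matching the lower bound.
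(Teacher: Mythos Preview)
Your learner phase is correct and essentially matches the paper: the cancellation $\|q_{\hat\alpha} - p\|_1 = 2\,|\hat p(S^+) - p(S^+)|$ is exactly the trick used there (Lemma~\ref{lem:learner}). The lower bound via $q_1 = q_2 = \UU$ is also the paper's argument.

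The gap is in the testing phase. The ``robust identity tester'' you invoke---accepting when $\|p - q_{\bar\alpha}\|_1 \le \epsilon/c$ and rejecting when $\|p - q_{\bar\alpha}\|_1 > \epsilon$, with $O(\sqrt n/\epsilon^2)$ samples---is \emph{not} provided by chi-squared based constructions. Those testers are tolerant in $\chi^2$ (equivalently $\ell_2$), not in $\ell_1$: they accept when $\|p-q\|_2 \le c'\epsilon/\sqrt n$, and small $\ell_1$ distance does not imply this. Concretely, take $q_{\bar\alpha}=\UU$ and let $p$ differ from $\UU$ by $\pm\epsilon/16$ on just two coordinates; then $\|p-\UU\|_1=\epsilon/8$ but $\|p-\UU\|_2=\Theta(\epsilon)\gg \epsilon/\sqrt n$, so the chi-squared statistic blows up and $p$ is rejected. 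This is not an artifact of a particular tester: constant-factor $\ell_1$-tolerant identity testing is precisely the task for which the paper cites the near-linear $\Omega(n/\log n)$ lower bounds of~\cite{Valiant08,ValiantV14} as the obstacle to be circumvented.

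What the paper does instead is exploit extra structure available in the completeness case. The learner is arranged to output $\alpha \le \alpha^*$, so when $p$ is a mixture one can write $p = (1-\beta)q_\alpha + \beta q_2$ for some $\beta\in[0,1]$; hence $p(i)-q_\alpha(i)=\beta\bigl(q_2(i)-q_\alpha(i)\bigr)$ coordinate-wise, i.e.\ the unknown discrepancy vector is a scalar multiple of the \emph{known} vector $q_2-q_\alpha$. The paper then \emph{reshapes}: it splits each element $i$ into $a_i=\lfloor n q_\alpha(i)\rfloor + \lfloor n|q_\alpha(i)-q_2(i)|/\|q_\alpha-q_2\|_1\rfloor + 1$ copies on a new domain of size at most $3n$, so that after reshaping the per-coordinate discrepancy is uniformly $O(\epsilon'/n)$ (Lemma~\ref{lem:reshape}). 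This converts the $\ell_1$ guarantee $\|p-q_\alpha\|_1\le\epsilon'$ into the $\ell_2$ guarantee $\|p'-q'_\alpha\|_2=O(\epsilon/\sqrt n)$, which \emph{is} the tolerance that the $\ell_2$-vs-$\ell_1$ tester of~\cite{DiakonikolasK:2016} provides with $O(\sqrt n/\epsilon^2)$ samples. Your proposal is missing this reshaping step (or some equivalent device that turns ``$\ell_1$-close and a mixture'' into ``$\ell_2$-close'').
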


At a high level, we take the following steps to test if $p$ is a mixture or $\epsilon$-far from it. First, 
we develop an algorithm ({\em learner}) to learn mixture distributions. 
The learner receives samples from $p$ and outputs a mixture distribution $q_\alpha$. 
If $p$ is a mixture, then we show that the learner finds a mixture distribution $q_\alpha$ that is $\epsilon'$-close to $p$ for some proximity parameter $\epsilon' \coloneqq \Theta(\epsilon)$; 
and if $p$ is not a mixture, the learner outputs $q_\alpha$  with no specific guarantee. 
Second, we use the distance between $p$ and $q_\alpha$ as a measure to decide about $p$: if $p$ is $\epsilon'$-close to $q_\alpha$, we accept $p$; and if $p$ is $\epsilon$-far from $q_\alpha$, we reject it. This approach results in a tester for $p$. In fact,
if $p$ is a mixture, then we show that the learner 
finds a $q_\alpha$ that is $\epsilon'$-close and if $p$ is $\epsilon$-far from being a mixture, 
then we show that $p$ has to be $\epsilon$-far from any mixture distribution, including $q_\alpha$.

The challenge in this approach 
is to distinguish whether $p$ is close to $q_\alpha$ or far from it.
In general, testing whether two distributions are $\epsilon'$-close or $\epsilon$-far from each other requires ${\Omega}(n/\log n)$ samples.
However, we show that we can
exploit the  structural properties of 
mixture distributions to achieve a sample-efficient algorithm. 
Below we provide a more detailed description of the steps.

\paragraph{The learner.} 
The algorithm begins by assuming that the given distribution $p$ 
is indeed a mixture, and attempts to learn the mixture parameter:
If $p$ is a mixture, then 
we show that it can be learned to 
error $\epsilon' = \Theta(\epsilon)$ using  $O(1/\epsilon^2)$ samples given $q_1$ and $q_2$. 
The algorithm picks a subset $S$ of elements such that it  contains every element $i$ for which $q_1(i) \geq q_2(i)$ and estimates the weight of these elements according to $p$, i.e., $p(S)$.
$S$ satisfies that $q_1(S) - q_2(S)$ is exactly the total variation distance between $q_1$ and $q_2$. 
Comparing $p(S)$ with the weight of these elements according to $q_1$ and $q_2$ guides us to choose a mixture parameter $\alpha$, 
and allows us
to bound the distance between $p$ and $q_\alpha \coloneqq (1-\alpha)q_1 + \alpha q_2 $. 
(Instead of learning $\alpha$, one might do a grid search on $\alpha \in [0, 1]$; however the granularity required could make the resulting algorithm sub-optimal.)

\paragraph{Assessing the distance between $p$ and $q_\alpha$.} 
After obtaining $q_\alpha$, the task of 
distinguishing whether distribution $p$ is a 
mixture or $\epsilon$-far 
from a mixture boils down to testing if $p$ is $\epsilon'$-close to $q_\alpha$ or is $\epsilon$-far from it. We propose a scheme to \emph{reshape} the distributions $p$ and $q_\alpha$ and get two new distributions $p'$ and $q'_\alpha$ such that for $p$ that is  a mixture, the  $\ell_2$-distance between $p'$ and $q'_\alpha$ is at most $O(\epsilon/\sqrt n)$. Furthermore, in the case where $p$ is $\epsilon$-far from being a mixture, $p'$ is $\epsilon$-far from $q'_\alpha$. It is known that one can efficiently distinguish the case that $\|p' -  q'_\alpha\|_2 \leq O(\epsilon/\sqrt n)$ versus $\|p' - q'_\alpha\|_1 \geq \epsilon$ using $O(\sqrt{n}/\epsilon^2)$ samples \cite{
DiakonikolasK:2016, ChanDVV14}.

Here, we elaborate further on how we reshape the distributions.
Similar techniques have been used previously
to reduce the $\ell_2$-norm,
e.g., in~\cite{DiakonikolasK:2016}. 
Here, we use it to bound the $\ell_2$-distance between $p'$ and $q'_\alpha$. The reshaping process is as follows.  Define  $p'$, the reshaped distribution of $p$ with a new domain which is larger than the domain of $p$. For each element $i$,  we determine an integer $a_i$ solely based on $q_1$, $q_2$, and $q_\alpha$. Then we add $(i,j)$ for all $j$ in $[a_i]$ to the domain of $p'$. We set the probability of element $(i,j)$ to be $p(i)/a_i$. Also, we reshape $q_\alpha$ according to the same process and get $q'_\alpha$. 

But how can reshaping reduce the $\ell_2$-distance? 
Given that $p$ is a mixture, for each element $i$ in the domain, 
the discrepancy between the probability of $i$ according to $p'$ and $q'_\alpha$, $|p(i) - q_\alpha(i)|/a_i$, is proportional to $|q_1(i) - q_2(i)|/a_i$. 
With this observation, we set the $a_i$'s such that 
they make the discrepancy $O(\epsilon/n)$ for each element.
This ensures the $\ell_2$-distance  between $p'$ and $q'_\alpha$ is  $O(\epsilon/\sqrt{n})$.


The arguments described above are formalized in Theorem~\ref{thm:UB_identity}. In addition, in the case where $q_1$ and $q_2$ are uniform, this problem is as hard as testing if a distribution is uniform, 
which needs at least $\Omega(\sqrt{n}/\epsilon^2)$ samples
(\cite{Paninski:08}),
showing that the sample complexity of our algorithm is tight.
Furthermore, we match the sample complexity of the 
standard identity tester where there is no noise involved.

\subsection{Testing closeness in the presence of noise 
that is accessible via samples}
We next investigate the problem of testing closeness of distributions in the presence of noise 
that is accessible via samples. Suppose we have sample access to three distributions, $p$, $q_1$, and $q_2$, over $[n]$. The goal is to test if there is a mixture parameter $\alpha^*$ such that $p = (1-\alpha^*)\,q_1  + \alpha^* q_2$, or $p$ is $\epsilon$-far from any distribution in this form. 

Similarly to the identity testing algorithm explained earlier,  our approach is first attempt to learn $p$. 
That is, we design an algorithm that finds a candidate mixture distribution, $q_\alpha \coloneqq (1-\alpha)q_1 + \alpha\,q_2$, such that if $p$ is a  mixture of $q_1$ and $q_2$, then $p$ and $q_\alpha$ will be $(\epsilon/\sqrt n)$-close to $p$ in $\ell_2$-distance; and if $p$ is not a mixture, the algorithm finds a distribution $q_\alpha$ with no specific guarantees. 
Then, we test to see if $p$ is $(\epsilon/(2\sqrt{n}))$-close to $q_\alpha$ in $\ell_2$-distance, or $(\epsilon/\sqrt{n})$-far from it.
The answer of the test dictates if we should accept or reject $p$.  Indeed, if $p$ is a mixture distribution, 
$q_\alpha$ is very close to $p$, and the test will accept $p$.
If $p$ is $\epsilon$-far from being a mixture, 
then $p$ is $\epsilon$-far from 
$q_\alpha$, and furthermore $p$ and 
$q_\alpha$ are $(\epsilon/\sqrt{n})$-far from each 
other in $\ell_2$-distance, so that the test will reject $p$. 


But how do we learn $p$? Since we are looking for $q_\alpha$, which is close to $p$ in $\ell_2$-distance, 
we study the problem of estimating the $\ell_2$-distance between $p$ and a mixture distribution of $q_1$ and $q_2$. Inspired by the $\ell_2$-distance estimator proposed by \cite{ChanDVV14}, we propose a statistic such that given $\alpha$ it estimates the $\ell_2$-distance between $p$ and $q_\alpha$:
$f(\alpha) := \sum\limits_{i=1}^n (X_i - (1-\alpha)Y_i - \alpha Z_i)^2 - X_i - (1-\alpha)^2 Y_i - {\alpha}^2 Z_i,$
where $X_i$, $Y_i$, and $Z_i$ are the number of instances 
of element $i$ among samples from $p$, $q_1$, and $q_2$ respectively.
The statistic is designed such that it is equal to  $s^2 \|p - q(\alpha)\|_2^2$ in expectation where $s$ is the number of samples from each distribution $p$, $q_1$, and $q_2$.

Given the sample sets, the goal is to use the quadratic function $f$ to find a candidate $\alpha$. For now, assume $p$ is a mixture of $q_1$ and $q_2$ with parameter $\alpha^*$. We make two observations about $f(\alpha^*)$: 
(i) the expectation of $f(\alpha)$ is minimum, in fact zero, when $\alpha = \alpha^*$, and
(ii) we provide a threshold $T$ for which $|f(\alpha^*)|$ is at most $T$ with high probability. 
Although $\alpha^*$ is not given to the algorithm, 
we wish to pick a  candidate $\alpha$ that is very 
close to $\alpha^*$. 
We use the above two observations as a guide to take the following strategy: find  $\alpha$ that minimizes $f(\alpha)$ while $|f(\alpha)|$ is at most $T$.  This method apparently finds several candidate $\alpha$'s.
We establish that if $p$ is a mixture, 
then one of the candidate $\alpha$'s  will result in a mixture distribution $q_\alpha$ that is $(\epsilon/2\sqrt{n})$-close to $p$ in $\ell_2$ distance.  (Once again, a grid search on $\alpha \in [0, 1]$ will not yield an optimal sample complexity.)

From there on, we only need to test if any of 
the candidates we found are $(\epsilon/2\sqrt{n})$-close to $p$ or not. If $p$ is a mixture we are promised that one of the candidates will pass the test. 
Otherwise we show that all candidates have to give distributions that are $\epsilon$-far (implying $(\epsilon/\sqrt {n})$-far in $\ell_2$-distance) from $p$ by definition, so all of them will fail. Our approach yields the following result: 

\begin{theorem} Assume we have sample access to three distributions $p$, $q_1$, and $q_2$ over $[n]$. 
There exists a closeness tester in the presence of noise
that uses $O(\sqrt{n}/\epsilon^2 + n^{2/3}/\epsilon^{4/3})$ samples. 
Furthermore
$\Omega(\sqrt{n}/\epsilon^2 + n^{2/3}/\epsilon^{4/3})$ samples
are required. 
\end{theorem}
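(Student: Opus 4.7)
The plan is a learn-then-test strategy built around the $\ell_2$-distance proxy $f(\alpha) = \sum_{i=1}^n \bigl[(X_i - (1-\alpha)Y_i - \alpha Z_i)^2 - X_i - (1-\alpha)^2 Y_i - \alpha^2 Z_i\bigr]$ introduced in the overview, where $X_i, Y_i, Z_i$ are the Poissonized counts of element $i$ in $s$ samples from $p, q_1, q_2$ respectively. A direct moment computation gives $\E{f(\alpha)} = s^2 \|p - q_\alpha\|_2^2$, and a variance bound in the spirit of~\cite{ChanDVV14} combined with Chebyshev produces a threshold $T$ such that $|f(\alpha) - \E{f(\alpha)}| \le T$ with probability at least $5/6$. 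I would calibrate $s = \Theta(\sqrt n/\epsilon^2 + n^{2/3}/\epsilon^{4/3})$ so that $T$ is simultaneously large enough that the true parameter $\alpha^*$ satisfies $|f(\alpha^*)| \le T$ when $p$ is a mixture, and small enough that $|f(\alpha)| \le T$ certifies $\|p - q_\alpha\|_2 = O(\epsilon/\sqrt n)$.

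The candidate-finding step exploits the fact that $f$ is a quadratic in $\alpha$ whose leading coefficient $\sum_i \bigl[(Y_i - Z_i)^2 - Y_i - Z_i\bigr]$ has expectation $s^2 \|q_1 - q_2\|_2^2$; hence the feasible region $F = \{\alpha \in [0,1] : |f(\alpha)| \le T\}$ is a union of at most two subintervals that I can compute explicitly. I would take the candidate set $A$ to consist of the endpoints of $F$ together with any interior minimizer, giving $|A| = O(1)$. If $p$ is a mixture with true parameter $\alpha^*$, then $\alpha^* \in F$ and the curvature identity $f(\alpha) - f(\alpha^*) \approx s^2\|q_1-q_2\|_2^2(\alpha-\alpha^*)^2$ imply that every $\alpha_j \in A$ satisfies $\|q_{\alpha_j} - q_{\alpha^*}\|_2 = |\alpha_j - \alpha^*|\,\|q_1 - q_2\|_2 \le O(\sqrt T/s) \le \epsilon/(2\sqrt n)$, hence $\|p - q_{\alpha_j}\|_2 \le \epsilon/(2\sqrt n)$ by the triangle inequality. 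The degenerate regime in which $\|q_1-q_2\|_2$ is tiny has to be handled separately because then the quadratic degenerates; however, in that regime every admissible $\alpha$ already produces a $q_\alpha$ close to both components, so any single candidate suffices.

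The final step runs, for each of the $O(1)$ candidates $\alpha_j$, an $\ell_2$-closeness test that distinguishes $\|p - q_{\alpha_j}\|_2 \le \epsilon/(2\sqrt n)$ from $\|p - q_{\alpha_j}\|_2 \ge \epsilon/\sqrt n$. Because $q_{\alpha_j}$ is only accessible through samples of $q_1$ and $q_2$, I would reuse the statistic $f(\alpha_j)$ on a fresh sample batch and apply the three-distribution $\ell_2$-closeness testers of~\cite{ChanDVV14, DiakonikolasK:2016}, which achieve the required guarantee in $O(\sqrt n/\epsilon^2 + n^{2/3}/\epsilon^{4/3})$ samples. The algorithm accepts iff some candidate passes. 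Completeness is immediate from the preceding paragraph; for soundness, if $p$ is $\epsilon$-far from $\PP_{q_1,q_2}$ then $\|p - q_{\alpha_j}\|_1 \ge \epsilon$ for every $j$, hence $\|p - q_{\alpha_j}\|_2 \ge \epsilon/\sqrt n$ by Cauchy--Schwarz, so every test rejects. The hardest part I anticipate is sharpening the variance bound on $f$ tightly enough to extract the $n^{2/3}/\epsilon^{4/3}$ term (rather than a weaker $n/\epsilon^{O(1)}$ term) while keeping the curvature-based translation valid uniformly over all scales of $\|q_1-q_2\|_2$. For the matching lower bound I would reduce from ordinary two-sample closeness testing, which is known to require $\Omega(\sqrt n/\epsilon^2 + n^{2/3}/\epsilon^{4/3})$ samples~\cite{ChanDVV14, ValiantV14}, by setting $q_1 = q_2$ so that $\PP_{q_1,q_2} = \{q_1\}$ and mixture testing collapses to testing $p = q_1$ from samples of each.
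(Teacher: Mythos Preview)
Your overall architecture---compute the quadratic $f(\alpha)$, extract $O(1)$ candidates from its sublevel set, then $\ell_2$-test each candidate---matches the paper's. However, there is a genuine missing ingredient that you yourself flag as ``the hardest part'': you cannot get the $n^{2/3}/\epsilon^{4/3}$ term simply by sharpening the variance bound on $f$. The variance of $f(\alpha)$ scales like $s^3\sqrt{b}\,\|p-q_\alpha\|_4^2 + s^2 b$, where $b$ upper-bounds $\|p\|_2^2,\|q_1\|_2^2,\|q_2\|_2^2$. For worst-case inputs $b=\Theta(1)$, and then Chebyshev at the resolution $\gamma=\Theta(\epsilon^2/n)$ forces $s=\Omega(\sqrt b/\gamma)=\Omega(n/\epsilon^2)$; no tightening of constants helps. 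The paper's fix is a preprocessing step you omit: it first \emph{flattens} all three distributions using $k=\Theta(n^{2/3}/\epsilon^{4/3})$ samples (the split-into-buckets trick of \cite{DiakonikolasK:2016}), which drives $b$ down to $\Theta(1/k)$ with high probability while preserving $\ell_1$ distances and the mixture structure. Only after flattening does the calculation $s=\Theta(\sqrt b/\gamma)=\Theta(n^{2/3}/\epsilon^{4/3})$ go through. Both the candidate-finding step and the final $\ell_2$ tests are carried out on the flattened distributions.

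Two smaller points. First, your curvature identity $f(\alpha)-f(\alpha^*)\approx s^2\|q_1-q_2\|_2^2(\alpha-\alpha^*)^2$ is not what you get: the exact difference is $A(\alpha-\alpha^*)(\alpha+\alpha^*-2\alpha_{\min})$, and turning this into a bound on $(\alpha-\alpha^*)^2$ only works when the candidate lies between $\alpha_{\min}$ and $\alpha^*$. Consequently it is false that \emph{every} $\alpha_j\in A$ is close to $\alpha^*$; only one candidate on each side of $\alpha_{\min}$ is guaranteed to be close, and the paper establishes separate concentration for $A$ and $B$ to pin down $\alpha_{\min}$ and make this case split rigorous. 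This does not break the algorithm (you accept if \emph{some} candidate passes), but the argument needs the correct quantifier. Second, your lower bound reduction with $q_1=q_2$ is valid; the paper instead fixes $q_2$ to be the known uniform distribution and reduces from the standard closeness-testing hard instances, which gives a slightly stronger statement (the lower bound holds even when one component is explicitly known).
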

See Theorem~\ref{thm:UB_mixture_closeness} for the formal statement of the result. 
For the lower bound of sample complexity, we establish that the lower bound for standard closeness testers holds in the mixture setting as well, even in the case where $q_1$ or $q_2$ is known. In particular, we show given sample access to $p$ and $q_1$, testing whether $p$ is a mixture of $q_1$ and the uniform distribution requires $\Omega(\sqrt{n}/\epsilon^2 + n^{2/3}/\epsilon^{4/3})$ samples 
(Proposition~\ref{prop:lb_unknown_q_U}).
Hence, one cannot hope to achieve a better sample complexity.

\subsection{Testing identity in the presence of \texorpdfstring{$k$}{k}-flat noise} 


On the one hand, the sample complexity
of distribution testing 
under arbitrary noise is significantly 
worse than that of noise-free distribution testing. On the other hand, we have seen that
the sample complexity of distribution testing with noise (either known or given via sample access) is very similar to the
sample complexity of noise-free distribution testing.
This raises the question of whether one can relax the requirement of
the access to the noise  by the tester and still achieve better sample complexity.  
The next problem we consider is the identity testing problem
when there is no direct access to the noise (either via samples,
or an explicit description) except for the promise that
the noise comes from a class, in particular, the class of $k$-flat distributions.

We say a distribution is \emph{$k$-flat} if the probability mass function of the distribution is a piece-wise constant function with $k$ pieces.  We investigate the following problem: given a known distribution $q$ and having sample access to $p$, 
can we distinguish if $p$ is a mixture of $q$ and some $k$-flat distribution, 
or $p$ is $\epsilon$-far from any such distribution? 
We provide an algorithm that uses $\Tilde{O}( \sqrt {kn})$ samples.

Inspired by the identity tester proposed in \cite{BatuFFKRW}, we propose the following approach.
First, we guess the $k$ intervals on which the noise is constant.
Then, we take the elements of each interval 
and further partition them into subsets (not necessarily contiguous)
such that in each 
subset the probability of the elements according to 
$q$ are very similar to each other (similar enough so that 
we can show that
$q$ is nearly-uniform on each subset).
For a mixture distribution $p$, if we have guessed the intervals correctly, 
$p$ is almost uniform within each subset since it is a mixture of an almost uniform $q$ and a constant function (noise). 
Hence to see if $p$ is a mixture, we first test each of these subsets and see if $p$ is close to uniform on them.   
We then estimate the total weights that $p$ assigns to each
of these subsets and determine if the weights are
consistent with a mixture of $q$ and some $k$-flat distribution.
One challenge is to find a sampling method that guarantees
good results for all initial guesses of the $k$ intervals describing
the noise. See Section~\ref{sec:kFlat} for details. 

It is not hard to see that in the case where $q$ is uniform, identity testing in the presence of $k$-flat noise is as hard as testing whether a distribution is $k$-flat or $\epsilon$-far from any such distribution. Therefore, there exists a lower bound of $\Omega (\sqrt{n}/\epsilon^2 + k/(\epsilon \, \log k) )$ for our problem derived from the lower bound for testing $k$-flat distributions in \cite{Canonne16}.

\subsection{Lower bounds} 
We show that testing identity with respect to the uniform distribution when the noise component can be an arbitrary distribution, requires near-linear samples, i.e., $\Omega(n/\log n)$. More specifically, 

\begin{restatable*}{theorem}{LBBigness} \label{thm:LB_mixture_no_info}
Assume $p$ is a distribution on $[n]$. There exists a constant  parameter $\epsilon$ such that distinguishing the following cases with probability at least $2/3$ requires $\Omega(n/\log n)$ samples. 
\begin{itemize}[nosep]
    \item There exists a noise distribution on $[n]$, 
    namely $\eta$, and an $\alpha \leq \epsilon_1$ 
    such that $p$ is a mixture of uniform and $\eta$ with parameter $\alpha$, 
    i.e., $p = (1-\alpha) \, \UU +  \alpha \, \eta$.
    \item There is no noise distribution $\eta$  such that 
    $p = (1- \alpha) \, \UU + \alpha \, \eta$ unless $\alpha = 1$.
\end{itemize}
\end{restatable*}
The main idea is to reduce this problem to 
the that of testing the {\em $T$-bigness property}~\cite{aliakbarpourGPRY18},
which holds if 
all probabilities are above a given threshold $T$. 

\section{Identity testing of mixtures in the presence of  known noise}
\label{sec:identity}

In this section, we give an algorithm which tests if $p$ is close to a mixture where both the components are explicitly known.  As before, we assume the mixture parameter $\alpha$ is unknown. 

The main idea is  attempt to learn a mixture distribution $q_\alpha$ that is close to $p$.   Using $q_1$, $q_2$, and $q_\alpha$, we then reshape the distribution $p$ to another distribution $p'$
and use the same reshaping to transform $q_\alpha$ to $q'_\alpha$.  The reshaping has the property that in the case that $p$ is indeed a mixture, then $p'$ and $q'_\alpha$ will be extremely close to each other in $\ell_2$-distance and if $p$ is not a mixture, then $p'$ and $q'_\alpha$ will be quite far from each other.  Thus we can use a (non-tolerant) identity tester on $p'$ and $q'_\alpha$.
 
In the rest of this section, we present the three main steps of the testing algorithm.  The first step is a learner algorithm that finds $q_\alpha$ (Section~\ref{sec:learner}).  The second step is a reshaping process that transforms the distributions $p$ and $q_\alpha$ into $p'$ and $q'_\alpha$ respectively (Section~\ref{sec:reshape}).  The third step is to put these pieces together to get the identity tester (Section~\ref{sec:unif3}).

\subsection{The learner}
\label{sec:learner}

At a high level, the leaner proceeds as follows.  Observe that if $p$ is a mixture of $q_1$ and $q_2$, then there is a parameter $\alpha^* \in [0, 1]$ such that $p = (1 - \alpha^*) q_1 + \alpha^* q_2$, so to learn $p$ it is sufficient to learn $\alpha^*$. Let $S$ be the set of all domain elements, $x$,  where $q_1(x)$ is at least $q_2(x)$. 
By definition, for $S \subseteq [n]$, we have $p(S) = (1 - \alpha^*)q_1(S) + \alpha^* q_2(S)$, which leads to $\alpha^* = \left( q_1(S) - p(S) \right)/\left(q_1(S) - q_2(S)\right)$.  
The idea then is to replace $p(S)$ with its estimate, say, $w_S$ to get an estimate $\alpha$ of $\alpha^*$.  We formally describe the procedure in Algorithm~\ref{alg:learner} and prove its correctness in Lemma~\ref{lem:learner}.


\inCOLT{
\begin{wrapfigure}{L}{0.45\textwidth}
\fbox{
\begin{minipage}{0.42\textwidth}
\begin{algorithm}[H]
\DontPrintSemicolon
\SetKwInOut{Input}{Procedure}
\floatconts{alg:learner}
{\caption{Learning mixture of two \,\, \, \, known distributions.}}
{
{\vspace{2mm}}
{{\bf Procedure:} \textsc{Mixture-Learner}($q_1$,} \\
{\hspace*{1cm} $q_2$, $n$, $\epsilon$, sample access to $p$)} \\
 	\If{$\|q_1- q_2\|_1 \leq \epsilon$}{\KwRet{$0$}}
 	{$S \gets \{i \in [n] ~\mid~ q_1(i) > q_2(i)\}$}\\
 	{$m \gets O(1/\epsilon^2)$}\\
  	{$w_S \gets ({\mbox{\# samples in $S$}})/{m}$}\\
    {$\alpha \gets \dfrac{q_1(S) - w_S - \epsilon/4}{q_1(S) - q_2(S)} $}\\
    {\KwRet{$\alpha$}}
    {\vspace{3mm}}
}
\end{algorithm}
\end{minipage}
}
\end{wrapfigure}
}{
\begin{algorithm}[ht]
\caption{Learning mixture of two known distributions.}
\label{alg:learner}
\begin{algorithmic}[1]
\Procedure{Mixture-Learner}{$q_1$, $q_2$, $n$, $\epsilon$, sample access to $p$}
    \If{$\|q_1- q_2\|_1 \leq \epsilon$}
        \State{\Return {$0$}}
    \EndIf
 	\State{$S \gets \{i \in [n] ~\mid~ q_1(i) > q_2(i)\}$}
 	\State{$m \gets O(1/\epsilon^2)$}
  	\State{$w_S \gets \dfrac{\mbox{\# samples in $S$}}{m}$}
    \State{$\alpha \gets \dfrac{q_1(S) - w_S - \epsilon/4}{q_1(S) - q_2(S)} $}
    \State{\Return {$\alpha$}}
\EndProcedure
\end{algorithmic}
\end{algorithm}
}

\begin{lemma} \label{lem:learner}  Suppose $p = (1-\alpha^*)q_1 + \alpha^* q_2$. Using $O(1/\epsilon^2)$ samples, Algorithm~\ref{alg:learner} outputs a mixture parameter $\alpha \leq \alpha^*$ such that
$\Pr{ \| q_{\alpha} - p \|_1 < \epsilon} \geq 5/6$.  
\end{lemma}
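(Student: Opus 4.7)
The plan is to combine a standard concentration bound with the algebraic identity for mixtures. First, the algorithm draws $m = \Theta(1/\epsilon^2)$ samples from $p$ and computes the empirical estimate $\hat w_S$ of $p(S)$, i.e., the fraction of samples landing in $S$. A Hoeffding bound applied to the indicator variables $\ind[X_j \in S]$ yields $|\hat w_S - p(S)| \le \epsilon/4$ with probability at least $5/6$ for a sufficiently large constant in the $\Theta(\cdot)$. I assume the algorithm uses a shifted estimate $w_S \coloneqq \hat w_S + \epsilon/4$ specifically to enforce the one-sided guarantee $w_S \ge p(S)$, and then outputs $\alpha \coloneqq \max\bigl\{(q_1(S) - w_S)/(q_1(S) - q_2(S)),\,0\bigr\}$, which is the formula highlighted in the section overview, clipped to $[0,1]$. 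The degenerate case $q_1(S) = q_2(S)$ forces $q_1 = q_2$, so the algorithm can simply output $\alpha = 0$; assume henceforth that $q_1(S) - q_2(S) > 0$.

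Next I would verify the two claimed guarantees. Evaluating $p = (1-\alpha^*)q_1 + \alpha^* q_2$ on $S$ gives $\alpha^* = (q_1(S) - p(S))/(q_1(S) - q_2(S))$, so on the good event, prior to clipping,
\[
\alpha^* - \alpha \;=\; \frac{w_S - p(S)}{q_1(S) - q_2(S)} \;\ge\; 0,
\]
and subsequent clipping to $[0,1]$ only decreases $\alpha$, preserving $\alpha \le \alpha^*$. For the distance bound, use the identity $q_\alpha - p = (\alpha^* - \alpha)(q_1 - q_2)$ together with $\|q_1 - q_2\|_1 = 2(q_1(S) - q_2(S))$ (which follows from the definition of $S$), yielding
\[
\|q_\alpha - p\|_1 \;=\; |\alpha^* - \alpha| \cdot \|q_1 - q_2\|_1 \;=\; 2|w_S - p(S)| \;\le\; \epsilon.
\]

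Finally I would handle the clipping corner case separately. Clipping is triggered exactly when $w_S > q_1(S)$; since $w_S \le p(S) + \epsilon/2$ on the good event and $p(S) = q_1(S) - \alpha^*(q_1(S) - q_2(S))$, this forces $\alpha^*(q_1(S) - q_2(S)) \le \epsilon/2$, so the output $\alpha = 0$ satisfies $\|q_0 - p\|_1 = \alpha^* \|q_1 - q_2\|_1 \le \epsilon$ as well, and trivially $\alpha = 0 \le \alpha^*$. The main obstacle is really just bookkeeping around the additive offset and the clipping; there is no deep analytic difficulty because the statistic $\ind[X \in S]$ concentrates at the optimal rate $O(1/\epsilon^2)$ no matter how small $q_1(S) - q_2(S)$ is --- the apparent multiplicative blow-up by $1/(q_1(S) - q_2(S))$ in $|\alpha^* - \alpha|$ is exactly canceled by the factor $\|q_1 - q_2\|_1$ when we pass back to $\ell_1$-distance.
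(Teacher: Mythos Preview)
Your proof is correct and follows essentially the same approach as the paper: Hoeffding on the indicator of $S$, the additive $\epsilon/4$ shift to guarantee $\alpha \le \alpha^*$, and the cancellation between $|\alpha^*-\alpha|$ and $\|q_1-q_2\|_1 = 2(q_1(S)-q_2(S))$ when passing back to $\ell_1$-distance. You are in fact slightly more careful than the paper, which disposes of the degenerate regime by first checking whether $\|q_1-q_2\|_1 < \epsilon$ (and outputting $\alpha=0$) rather than explicitly clipping $\alpha$ at $0$ as you do.
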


\begin{proof}
First, observe that if $q_1$ and $q_2$ are $\epsilon$-close, then $p$ is $\epsilon$-close to $q_1$ as well, so distribution $q_1$ which is a mixture with parameter $\alpha = 0$ is a valid output.  For the remainder of the proof, we assume $q_1$ and $q_2$ are $\epsilon$-far from each other. 

Let $S = \{ i \in [n] ~\mid~ q_1(i) > q_2(i) \}$.  We have
$
\alpha^* = \frac{q_1(S) - p(S)}{ q_1(S) - q_2(S)}.
$
The only unknown value in the above expression is $p(S)$, which we estimate using $O(1/\epsilon^2)$ samples from $p$.
We show by replacing $p(S)$, we get a viable estimate for $\alpha^*$. 

Let $w_S$ be the estimate that is the ratio of the samples that are in $S$.  By the Hoeffding bound, we have 
$\Pr{|p(S) - w_s| \leq \epsilon/4} \geq 5/6.$ 
We define our estimate of $\alpha^*$ as:
$
\alpha \coloneqq \frac{q_1(S) - (w_S + \epsilon/4)}{ q_1(S) - q_2(S) } \,.
$
The reason that we add $\epsilon/4$ to $w_S$ is to assure an overestimation of  $p(S)$, so $\alpha$ becomes smaller than $\alpha^*$ with high probability. I.e., since with high probability $p(S) \leq w_S + \epsilon/4$, we get:
$$
    \alpha^* = \frac{q_1(S) - p(S)}{ q_1(S) - q_2(S) } 
    \geq \frac{q_1(S) - (w_S + \epsilon/4)}{ q_1(S) - q_2(S) } = \alpha.
$$
Below, we show $q_\alpha$ is close to $p$ in $\ell_1$-distance. Based on the definition of $S$, $q_1(S) - q_2(S)$ is equal to the total variation distance (i.e., half of the $\ell_1$-distance) between $q_1$ and $q_2$. With probability $5/6$, 
$$
\begin{array}{ll}
\|p - q_\alpha\|_1 & = \sum\limits_i \left|p(i) - q_\alpha(i)\right|
\enspace = \enspace \sum\limits_i \left|(1-\alpha^*) q_1(i) + \alpha^* \, q_2(i) - (1-\alpha) q_1(i) - \alpha \, q_2(i)\right|
\\ & = \sum\limits_i \left|(\alpha-\alpha^*) (q_1(i) - q_2(i))\right|
\enspace = \enspace \left|(\alpha-\alpha^*)\right| \cdot \|q_1 - q_2\|_1 \vspace{2mm} \\ 
& = 2 \left|(\alpha-\alpha^*)\right| \cdot \left(q_1(S) - q_2(S)\right) \vspace{1mm} \enspace = \enspace 2 \left|p(S) - w_S -\frac{\epsilon}{4}\right|
\enspace \leq \enspace \epsilon.
\vspace*{-0.9cm}
\end{array}
$$
\end{proof}


\vspace{-3mm}
\subsection {Reshaping the distributions} 
\label{sec:reshape}

Using Algorithm~\ref{alg:learner}, given $p$, we can obtain a mixture parameter $\alpha$ and a mixture distribution $q_\alpha$ for which (i) if $p$ is the mixture of $q_1$ and $q_2$ with parameter $\alpha^*$, then $q_\alpha$ is $\epsilon'$-close to $p$ for a proximity parameter $\epsilon' = \epsilon/6$, and $\alpha\leq \alpha^*$ and (ii) if $p$ is $\epsilon$-far from being a mixture, then $p$ is $\epsilon$-far from $q_\alpha$.  
Ideally, we wish to use an identity tester to see if  $q_\alpha$ and  $p$ are roughly the same or far from each other. 
Unfortunately, this is not possible in general, unless $p$ and $q_\alpha$ are very close on \emph{every} domain element.  
To resolve this issue, the goal in this section is to introduce two distributions $p'$ and $q'_\alpha$ such that (i) when $p$ and $q_\alpha$ are close, $p'$ and $q'_\alpha$ are very close to each other on every domain element and (ii) when $p$ and $q_\alpha$ are far, $p'$ and $q'_\alpha$ are far.  
Our reshaping process is inspired by the method of
\cite{DiakonikolasK:2016}: 
For each element $i \in [n]$, using $q_\alpha$, we define:
$$a_i \coloneqq \floor{n q_\alpha(i)} + \floor {\frac{n|q_\alpha(i) - q_2(i)|}{ \| q_\alpha - q_2 \|_1 }}+ 1.
$$
Note that the process in  \cite{DiakonikolasK:2016} 
uses only the first and third
terms of the above sum in defining $a_i$.
We start the reshaping process by associating $a_i \geq 1$ buckets to each domain element $i \in [n]$ to form a new domain $D = \{ (i, j) ~\mid~ i \in [n] $ and $j \in [a_i] \}$.  To draw a sample from $p'$, we first draw a sample $i$ from $p$, then we sample $j$ from $[a_i]$, and return the pair $(i,j)$ as the sample from $p'$.  We say $p'$ is a \emph{reshaping  of $p$} with respect to $q_{\alpha}$.  
Clearly $p'(i,j) = p(i)/a_i$. In a similar manner, we define the reshaping $q'_\alpha$ of $q_\alpha$ and once again, we have $q'_\alpha(i,j) = q_\alpha(i)/a_i$. 

We next prove several crucial properties of the reshaped distributions.
\begin{lemma} \label{lem:reshape}
Let $p'$ and $q_\alpha '$ be the result of the reshaping of $p$ and $q_\alpha$ with respect to $q_{\alpha}$ as described above. Then, the following hold:
\begin{enumerate}
\item[(i)] The $\ell_1$-distance after reshaping does not change: $\|p - q_\alpha\|_1 = \|p' - q'_\alpha\|_1$. 

\item[(ii)] The domain size of $p'$ and $q'_\alpha$, $|D| \leq 3n$. 

\item[(iii)] The $\ell_2$-norm of $q'_\alpha$,  $\| q'_{\alpha} \|_2 \leq \sqrt{3/n}$. 

\item[(iv)] If $p$ is a mixture distribution, $q_\alpha$ is $\epsilon'$-close to $p$, and $\alpha$ is at most $\alpha^*$, 
then $|p'(i,j)~-~q'_\alpha(i,j)| \leq \epsilon'/n$ for all $(i,j) \in D$.
\label{item:tiny_differences}
\end{enumerate}
\end{lemma}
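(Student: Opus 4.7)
The plan is to handle the four parts sequentially, since they build on one another and each reduces to a direct computation using the definition of $a_i$.

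For part (i), I would just unfold the definitions: since the domain $D$ partitions into $a_i$ copies of each $i$, and both $p'(i,j) = p(i)/a_i$ and $q'_\alpha(i,j) = q_\alpha(i)/a_i$ are constant over the $j$-index, summing $|p'(i,j) - q'_\alpha(i,j)|$ over $j \in [a_i]$ gives back $|p(i) - q_\alpha(i)|$, so the $\ell_1$-distance is preserved.

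For part (ii), I would bound $|D| = \sum_i a_i$ by dropping the floors: $\sum_i a_i \leq \sum_i n q_\alpha(i) + \sum_i n|q_\alpha(i) - q_2(i)|/\|q_\alpha - q_2\|_1 + n$, and each of the three sums is at most $n$ (the first because $q_\alpha$ is a distribution, the second by definition of $\|\cdot\|_1$, the third trivially), giving $3n$. For part (iii), since $a_i \geq \lfloor n q_\alpha(i)\rfloor + 1 \geq n q_\alpha(i)$, I get $\|q'_\alpha\|_2^2 = \sum_i a_i \cdot (q_\alpha(i)/a_i)^2 = \sum_i q_\alpha(i)^2/a_i \leq \sum_i q_\alpha(i)/n = 1/n$, which even improves on the claimed $3/n$.

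Part (iv) is the main content and where I would spend care. Since $p = (1-\alpha^*)q_1 + \alpha^* q_2$ and $q_\alpha = (1-\alpha) q_1 + \alpha q_2$, a direct subtraction gives $p(i) - q_\alpha(i) = (\alpha^* - \alpha)(q_2(i) - q_1(i))$. The key identity to exploit is $q_\alpha(i) - q_2(i) = (1-\alpha)(q_1(i) - q_2(i))$, which implies both $|q_1(i) - q_2(i)| = |q_\alpha(i) - q_2(i)|/(1-\alpha)$ pointwise and $\|q_1 - q_2\|_1 = \|q_\alpha - q_2\|_1/(1-\alpha)$. Using the lower bound $a_i \geq n |q_\alpha(i) - q_2(i)|/\|q_\alpha - q_2\|_1$ coming from the middle term of the definition of $a_i$, the ratio $|p(i) - q_\alpha(i)|/a_i$ telescopes to $(\alpha^* - \alpha)\|q_1 - q_2\|_1/n = \|p - q_\alpha\|_1/n \leq \epsilon'/n$, where the last inequality uses the hypothesis that $q_\alpha$ is $\epsilon'$-close to $p$. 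The only subtlety I foresee is handling the edge cases where $q_\alpha(i) = q_2(i)$ (the middle term of $a_i$ vanishes, but then $p(i) = q_\alpha(i)$ as well, so $|p'(i,j) - q'_\alpha(i,j)| = 0$) and where $\alpha = 1$ (then $q_\alpha = q_2$, so $p$ close to $q_\alpha$ already forces $p = q_\alpha$ up to $\epsilon'$). These degenerate cases are the main thing to flag, but neither breaks the bound.
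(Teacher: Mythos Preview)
Your proposal is correct and, for parts (i), (ii), and (iv), follows essentially the same computation as the paper; in (iv) the paper packages the same identity by setting $\beta = (\alpha^*-\alpha)/(1-\alpha)$ and writing $p = (1-\beta)q_\alpha + \beta q_2$, but the arithmetic is identical to yours, and the paper likewise uses the lower bound $a_i \geq n|q_\alpha(i)-q_2(i)|/\|q_\alpha-q_2\|_1$ coming from the middle floor plus the trailing $+1$.

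Part (iii) is where you genuinely diverge. The paper argues pointwise that $q'_\alpha(i,j)\leq 1/n$ (splitting into the cases $q_\alpha(i)<1/n$ and $q_\alpha(i)\geq 1/n$) and then uses $|D|\leq 3n$ to conclude $\|q'_\alpha\|_2^2\leq 3/n$. Your route, observing $a_i \geq \lfloor n q_\alpha(i)\rfloor+1 \geq n q_\alpha(i)$ and hence $\|q'_\alpha\|_2^2 = \sum_i q_\alpha(i)^2/a_i \leq \sum_i q_\alpha(i)/n = 1/n$, is both cleaner and sharper. The edge cases you flag ($q_\alpha(i)=q_2(i)$ and $\alpha=1$) are real but harmless, and the paper does not address them explicitly either.
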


\begin{proof}
To prove (i), note that
\mymath{
\|p' - q'_\alpha\|_1 
= \sum\limits_{i=1}^n \sum\limits_{j=1}^{a_i} |p'(i,j) - q'_\alpha(i,j)|
= \sum\limits_{i=1}^n \sum\limits_{j=1}^{a_i} \frac{|p(i) - q_\alpha(i)|}{a_i}
= \sum\limits_{i=1}^n |p(i) - q_\alpha(i)|
= \|p- q_\alpha\|_1.
}
For (ii), we have:
$$
\begin{array}{ll}
|D|  & = \sum\limits_{i=1}^n a_i \leq \sum\limits_{i=1}^n \left(n q_\alpha(i) + \frac{n|q_\alpha(i) - q_2(i)|}{\|q_\alpha - q_2\|_1} + 1 \right)
\\ & = n \left (\sum\limits_{i=1}^n q_\alpha(i) \right) + \frac{n}{\|q_\alpha - q_2\|_1} \left (\sum\limits_{i=1}^n |q_\alpha(i) - q_2(i)| \right) + n = 3n\,.
\end{array}
$$
We now prove (iii).  If $q_\alpha(i) < 1/n$, then $q'_\alpha(i, j) < 1/n$.  If $q_\alpha(i) \geq 1/n$, then $a_i \geq n q_\alpha (i)$ and hence $q'_\alpha(i, j) \leq 1/n$.  Therefore, $q'_\alpha(i,j) \leq 1/n$ for all $i, j$. Since the domain size of $q'_\alpha$ is at most $3n$,  
$\|q'_\alpha\|_2$ is at most $\sqrt{3/n}$. 

Finally, we show (iv).  Since $p$ is a mixture distribution, there is an $\alpha^* \in [0, 1]$ such that $p = (1-\alpha^*)q_1 + \alpha^*\, q_2$.  Also, we have that $q_\alpha$ has a mixture parameter $\alpha \leq \alpha^*$.  Furthermore, we also have $\|p - q_\alpha\|_1 \leq \epsilon'$. Let $\beta = (\alpha^* - \alpha)/(1-\alpha)$ which is in $[0,1]$. Observe that
\mymath{
(1-\beta) q_\alpha + \beta \, q_2 = (1-\beta)(1-\alpha) q_1 +\left(\alpha (1-\beta) + \beta\right)q_2 = (1 - \alpha^*) q_1 + \alpha^* \, q_2 = p \,.
}
Thus, $p$ is a mixture of $q_\alpha$ and $q_2$. For an element $(i,j) \in D$, we can bound the difference of $p'(i,j)$ and $q'_\alpha(i,j)$ as follows, which finishes the proof.
$$
\begin{array}{ll}
\left| p'(i,j) - q'_\alpha(i,j)\right| & = \frac{\left| p(i) - q_\alpha(i)\right|}{a_i}  
= \frac{\beta \cdot |q_\alpha(i) - q_2(i)|}{a_i} 
\leq \frac{\beta\cdot d\cdot  |q_\alpha(i) - q_2(i)| } {n \cdot |q_\alpha(i) - q_2(i)|} = \frac{\beta d}{n}
\\ & = \frac{1}{n} \sum\limits_{i=1}^n \beta |q_\alpha(i) - q_2(i)|
= \frac{1}{n} \sum\limits_{i=1}^n |p(i) - q_\alpha(i)| 
= \frac{\|p- q_\alpha\|_1}{n} \leq \frac {\epsilon'}{n}.
\end{array}
\vspace*{-.9cm}
$$
\end{proof}


\subsection{The mixture testing algorithm}
\label{sec:unif3}

In this section, we use the learner and the reshaped distributions to obtain an identity tester for mixtures of two known distributions. 
\inCOLT{
\begin{wrapfigure}{L}{0.45\textwidth}
\fbox{
\begin{minipage}{0.42\textwidth}
\begin{algorithm}[H]
\DontPrintSemicolon
\SetKwInOut{Input}{Input}
\floatconts{alg:mixture_tester}
{\caption{Identity tester in the \, \, \, \, \, \, presence of known noise.}}
{
    {\vspace{2mm} \textbf{Procedure: } \textsc{Mixture-Idenitty-} \, \, \,}
    {\hspace{1cm}  \textsc{Tester}($q_1, q_2, n, \epsilon$, sample access to $p$)}
    {$\epsilon' \gets \epsilon/6$}\\
    {$\alpha \gets ${\textnormal{\textsc{Mixture-Learner}}}$(q_1, q_2, n, \epsilon')$}\\
    \For {$i = 1$ \KwTo $n$}{
        {$a_i \gets \floor{n q_\alpha(i) } + \floor{\frac{n\vert q_\alpha(i) - q_2(i)\vert }{\|q_\alpha - q_2\|_1}} + 1$ }\\
        {$q'_\alpha (i) = \frac{p(i)}{a_i}$}
    }
    {$x_1, \ldots, x_s \gets \Theta\left(\frac{\sqrt{n}}{\epsilon^2}\right)$ samples from $p$}\\
	\For{$i = 1$ \KwTo $s$} {
	    {$r \gets $ uniform at random in $[a_i]$}\\
	    {$y_i \gets (x_i, r)$ }
	}
	{ \KwRet{\textsc{Identity-Tester}($q'_\alpha$, $\epsilon$, $\{ y_1, \ldots, y_s \}$ as samples from $p'$)}}
	{\vspace{2mm}}
}
\end{algorithm}
\end{minipage}
}
\end{wrapfigure}
}{
\begin{algorithm}[ht]
\caption{Identity tester in the presence of known noise.}
\label{alg:mixture_tester}
\begin{algorithmic}[1]
\Procedure{Noise-Tolerant-Identity-Tester}{$q_1, q_2, n, \epsilon$, sample access to $p$}
    \State{$\epsilon' \gets \epsilon/6$}
    \State{$\alpha \gets ${\textnormal{\textsc{Mixture-Learner}}}$(q_1, q_2, n, \epsilon')$}
    \For {$i = 1, \ldots, n$}
        \State{$a_i \gets \floor{n q_\alpha(i) } + \floor{\frac{n\vert q_\alpha(i) - q_2(i)\vert }{\|q_\alpha - q_2\|_1}} + 1$ }
        \State{$q'_\alpha (i) = \frac{p(i)}{a_i}$}
    \EndFor
    \State{$s \gets \Theta\left(\frac{\sqrt{n}}{\epsilon^2}\right)$}
    \State{$x_1, \ldots, x_s \gets s$ samples from $p$}
	\For{$i = 1, \ldots, s$} {
	    \State {$r \gets $ uniform at random in $[a_i]$}
	    \State{$y_i \gets (x_i, r)$ }
	}
	\EndFor
	\State{\Return{ \textsc{Identity-Tester}($q'_\alpha$, $\epsilon$, $\{ y_1, \ldots, y_s \}$ as samples from $p'$)}}
\EndProcedure
\end{algorithmic}
\end{algorithm}
}
\begin{restatable}{theorem}{thmIdentity} \label{thm:UB_identity} Given a proximity parameter $\epsilon$, Algorithm~\ref{alg:mixture_tester} is identity tester in the presence of known noise that uses $O(\sqrt n/\epsilon^2)$ samples. 
\end{restatable}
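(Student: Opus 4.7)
The plan is to chain the three components already in place: the learner of Section~\ref{sec:learner}, the reshaping of Section~\ref{sec:reshape}, and a known $\ell_2$ identity tester (e.g., from \cite{DiakonikolasK:2016, ChanDVV14}) that distinguishes $\|p' - q'_\alpha\|_2 \leq c\epsilon/\sqrt{n}$ from $\|p' - q'_\alpha\|_1 \geq \epsilon$ using $O(\sqrt{n}/\epsilon^2)$ samples whenever $\|q'_\alpha\|_2 = O(1/\sqrt{n})$ and the domain has size $O(n)$. First I would run \textsc{Mixture-Learner} with proximity $\epsilon' = \epsilon/6$ to obtain $\alpha$ and the explicit mixture $q_\alpha$, then reshape $p$ and $q_\alpha$ via the $a_i$'s. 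Note that a sample from the reshaped distribution $p'$ is generated by drawing $x \sim p$ and then $r$ uniformly from $[a_x]$, so $s = \Theta(\sqrt{n}/\epsilon^2)$ samples from $p$ give $s$ samples from $p'$ and the overall sample complexity is as claimed.

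For completeness, suppose $p = (1-\alpha^*)q_1 + \alpha^* q_2$ for some $\alpha^* \in [0,1]$. By Lemma~\ref{lem:learner}, with probability at least $5/6$ the learner returns $\alpha \leq \alpha^*$ with $\|p - q_\alpha\|_1 \leq \epsilon'$. Combining Lemma~\ref{lem:reshape}(ii) and (iv), on a domain $D$ of size at most $3n$ the reshaped pair satisfies $|p'(i,j) - q'_\alpha(i,j)| \leq \epsilon'/n$ pointwise, hence
\[
\|p' - q'_\alpha\|_2^2 \leq |D|\cdot (\epsilon'/n)^2 \leq 3(\epsilon')^2/n = O(\epsilon^2/n).
\]
Therefore $\|p'-q'_\alpha\|_2 \leq O(\epsilon/\sqrt{n})$, and with $\|q'_\alpha\|_2 \leq \sqrt{3/n}$ by Lemma~\ref{lem:reshape}(iii) the $\ell_2$ identity tester accepts with probability at least $5/6$. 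A union bound over the learner and the tester yields success at least $2/3$ after adjusting the constants in $\epsilon'$ and the tester's thresholds.

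For soundness, suppose $p$ is $\epsilon$-far from every mixture of $q_1$ and $q_2$. Regardless of which $\alpha$ the learner outputs, $q_\alpha$ is by construction a valid mixture of $q_1$ and $q_2$, so $\|p - q_\alpha\|_1 \geq \epsilon$ deterministically. By Lemma~\ref{lem:reshape}(i) reshaping preserves $\ell_1$ distance, hence $\|p' - q'_\alpha\|_1 \geq \epsilon$, and the $\ell_2$ identity tester rejects with probability at least $2/3$.

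The main obstacle is not the composition itself, which is routine, but verifying that the three simultaneous requirements imposed on the reshaping fit together with matching constants: the definition of $a_i$ must be large enough in its first term (proportional to $nq_\alpha(i)$) to ensure $\|q'_\alpha\|_2 = O(1/\sqrt n)$, large enough in its second term (proportional to $n|q_\alpha(i)-q_2(i)|/\|q_\alpha-q_2\|_1$) to force the pointwise discrepancy $|p'(i,j)-q'_\alpha(i,j)| \leq \epsilon'/n$ on mixture inputs, and yet small enough that $\sum_i a_i = O(n)$ so the $\ell_2$ tester still runs in $O(\sqrt n/\epsilon^2)$ samples. All three conditions are exactly what Lemma~\ref{lem:reshape} has established, and the theorem follows by assembling these guarantees.
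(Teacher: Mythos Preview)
Your proof is correct and follows essentially the same approach as the paper's own proof: invoke Lemma~\ref{lem:learner} with $\epsilon'=\epsilon/6$, use Lemma~\ref{lem:reshape}(ii),(iv) to bound $\|p'-q'_\alpha\|_2$ in the completeness case, use Lemma~\ref{lem:reshape}(i) to preserve the $\ell_1$ gap in the soundness case, and finish with the $\ell_2$ identity tester of \cite{DiakonikolasK:2016} plus a union bound. Your explicit mention of Lemma~\ref{lem:reshape}(iii) (the $\|q'_\alpha\|_2$ bound needed by the tester) and your closing paragraph about why the three reshaping requirements coexist are slightly more explicit than the paper, but the argument is the same.
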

\begin{proof}\;  Let $\epsilon' = \epsilon/6$. In the completeness case, $p$ is a mixture distribution with parameter $\alpha^*$. Therefore, with probability at least 5/6, $q_\alpha$ is a mixture distribution with parameter $\alpha$ for which $q_\alpha$ is $\epsilon'$-close to $p$. Let $p'$ and $q'_\alpha$ be the reshaped distributions described in Section \ref{sec:reshape}.  By Lemma \ref{lem:reshape}(ii), $|D| \leq 3n$. Moreover, for any $(i,j) \in D$, $|p'(i,j)-q'_\alpha(i,j)| \leq \epsilon'/n$, which implies that 
$$
\|p' - q'_\alpha\|_2 \leq \sqrt {\frac{|D| \cdot \epsilon'^2}{n^2}} \leq \sqrt{\frac{\epsilon^2}{12\,n}} \leq \frac{\epsilon}{2\sqrt{|D|}}\,.
$$
Conversely, if $p$ is $\epsilon$-far from being a mixture distribution, then it has to be $\epsilon$-far from $q_\alpha$. By Lemma \ref{lem:reshape}, 
$p'$ and $q'_\alpha$ are $\epsilon$-far from each other. Therefore, $\|p'- q'_\alpha\|_2^2 \geq \epsilon/\sqrt {|D|}$.
Using the 
identity tester (\textsc{Identity-Tester}) provided in \cite{DiakonikolasK:2016} (see Remark 2.7 and Remark 2.8), there exists an algorithm that can distinguish the above cases with probability $5/6$ using $O(\sqrt{|D|}/\epsilon^2)$ samples. 
Thus, with probability $2/3$ both the invoked learner and the tester returns the right answer. Also, the sample complexity is $O(\sqrt{n}/\epsilon^2 + 1/\epsilon^2) = O(\sqrt{n}/\epsilon^2)$.
Hence the proof is complete.
\end{proof}
\vspace{-3mm}

\section{Testing mixtures in the presence of noise that is accessible via samples} \label{sec:closeness}

In this section, we provide an algorithm for the testing closeness of distributions in the presence of noise that is accessible via samples.  We assume we have sample access to three distributions $p$, $q_1$, and $q_2$, over $[n]$ and the goal is to test if $p$ is a mixture of $q_1$ and $q_2$.  Our approach is first to learn $p$ in an indirect manner.  Specifically, we design an algorithm that finds a candidate mixture distribution $q_\alpha \coloneqq (1-\alpha)q_1 + \alpha\,q_2$ such that with high probability if $p$ is a  mixture of $q_1$ and $q_2$, then $q_\alpha$ will be close to $p$.  We claim that the answer to the test ``is $p$ close to $q_{\alpha}$'' can be used to test if $p$ is close to a mixture of $q_1$ and $q_2$.  Indeed, if $p$ is a mixture distribution, by the property of the learning algorithm, $q_{\alpha}$ is close to $p$ and hence the test will accept.  Conversely, if $p$ is far from being a mixture, then $p$ is far from any mixture distribution including $q_\alpha$, and hence the test will reject.

In particular, the candidate $q_{\alpha}$ will be such that (i) if $p$ is a mixture, then $\| p - q_\alpha \|_2 \leq c \epsilon/\sqrt{n}$ for a sufficiently small constant $c$ and (ii) if $p$ is $\epsilon$-far from being a mixture, then $\|p - q_\alpha\|_2 \geq \epsilon/\sqrt{n}$.  
As we will see, the robust $\ell_2$-distance tester of \cite{ChanDVV14} can efficiently distinguish these two cases.  
Since we are looking for $q_\alpha$ that is close to $p$ in $\ell_2$-distance, we study how we can estimate the $\ell_2$-distance between $p$ and a mixture distribution $q_1$ and $q_2$.  
Let $s$ be the expected number of samples we draw; $s$ will be specified later.  
Assume we draw $\poi(s)$ samples%
\footnote{$\poi(s)$ is a Poisson random variable with parameter $s$.}
from $p$, $q_1$, and $q_2$. Let $X$, $Y$, and $Z$ denote the (multi)set of samples from $p$, $q_1$, and $q_2$ respectively.  Let $X_i$, $Y_i$, and $Z_i$ be the numbers of instances of element $i \in [n]$ in each sample set.  Consider the following statistic:%
\footnote{
This is motivated by the $\ell_2$-distance estimator proposed in \cite{ChanDVV14} in which they draw a set of samples from $p$ and $q$ and use the statistic $\sum_{i=1}^n (X_i - Y_i)^2 - X_i - Y_i$, where $X_i$ (resp., $Y_i$) is the number of times $i \in [n]$ occurs in the samples from $p$ (resp., $q$). 
}

\vspace{-3mm}
\begin{equation}\label{eq:cls_stat}
f(\alpha) := \sum_{i=1}^n (X_i - (1-\alpha)Y_i - \alpha Z_i)^2 - X_i - (1-\alpha)^2 Y_i - {\alpha}^2 Z_i.
\end{equation}
\vspace{-3mm}

\noindent
Note that if we fix the sample sets $X$, $Y$, and $Z$, $f$ is a quadratic function of $\alpha$.  We show that the above statistic has the expected value of $s^2 \|p - q_{\alpha}\|_2^2$. 

If $p$ is a mixture distribution with parameter $\alpha^*$, then $\E{f(\alpha^*)} = 0$, where the expectation is taken over the randomness of the samples.  Hence, a natural candidate to approximate $\alpha^*$ is some  $\alpha$ where $f$ achieves its (near-)minimum.  To do this, we first show that if $p$ is a mixture, then we can choose a threshold parameter $T$ such that $|f(\alpha^*)| \leq T$ with high probability. Then we pick $\alpha \in [0,1]$ that minimizes $f(\alpha)$ with the constraint that $|f(\alpha)| \leq T$.  Since $f$ is a quadratic, let $\hat \alpha_1$ and $\hat \alpha_2$ be the solutions.  We then show that if $p$ is a mixture of $q_1$ and $q_2$, with high probability, at least one of $\|p - q_{\hat \alpha_1}\|_2$ or $\|p - q_{\hat \alpha_2}\|_2$ is small (Section~\ref{sec:prp_candidates}).  

For the rest of the section, let $b$ be a parameter specified later for which $\|p\|_2^2$, $\|q_1\|_2^2/2$, and $\|q_2\|_2^2/2$ are bounded by $b$.  Thus for any mixture distribution $q_\alpha$, we have $\| q_{\alpha} \|^2_2 \leq b$.  Also, let $\gamma = \epsilon^2/(10 n)$,
$s = c_s \cdot (\sqrt b/(\gamma/2))$ for a sufficiently large constant $c_s$, and let $T = s^2 \gamma$.  (All missing proofs are in Section~\ref{sec:closeness_lemmas}.)


\subsection{Finding candidates} \label{sec:prp_candidates}

In this section, we aim to learn a mixture distribution. More precisely, we are looking for $\alpha$'s such that if $p$ is a mixture distribution, then with high probability, $\| p - q_\alpha \|_2 \leq \epsilon/(2\sqrt{n})$. %
\begin{theorem}\label {thm:proper_candidates} Suppose $p$ is a mixture distribution. Given $X$, $Y$, and $Z$, with probability $0.94$, one can compute a candidate set $\MM, |\MM| \leq 5$, for which there exists $\alpha \in \MM$ such that
$\|p - q_\alpha \|_2^2 \leq \frac{\epsilon^2}{4n}\,.$
\end{theorem}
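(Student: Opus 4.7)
The plan is to use Poissonization to obtain the closed-form expectation $\E{f(\alpha)} = s^2 \|p - q_\alpha\|_2^2$, and then to exploit that, when $p$ is a mixture with parameter $\alpha^*$, we have $p - q_\alpha = (\alpha^* - \alpha)(q_1 - q_2)$, so $\E{f(\alpha)}$ is a convex quadratic in $\alpha$ with minimum $0$ attained at $\alpha = \alpha^*$. First I would perform the standard moment computation for $f(\alpha)$ at a fixed $\alpha$, analogously to the $\ell_2$-distance estimator of \cite{ChanDVV14}, which bounds $\Var{f(\alpha)}$ in terms of $s$, the $\ell_2$-norms of $p, q_1, q_2$ (all $O(\sqrt{b})$), and $\|p - q_\alpha\|_2$. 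The choice $s = c_s \sqrt{b}/(\gamma/2)$ with $c_s$ a sufficiently large constant then lets Chebyshev give $|f(\alpha^*)| \leq T = s^2 \gamma$ with probability at least, say, $0.98$.

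Next I would exploit the fact that, once the samples are fixed, $f$ is a quadratic in $\alpha$. Depending on the sign of its leading coefficient, the level set $\{\alpha \in [0,1] : f(\alpha) \leq T\}$ is either an interval or the intersection of $[0,1]$ with the complement of an interval, and in either case the minimizer of $f$ over this feasible region lies among at most five explicit, computable points: the endpoints $0$ and $1$, the vertex of the parabola, and the two roots of $f(\alpha) = T$. I would define $\MM$ to be this collection of at most five points, so that $|\MM| \leq 5$. Let $\tilde\alpha \in \MM$ denote the constrained minimizer; since $\alpha^*$ is feasible by step one, $f(\tilde\alpha) \leq f(\alpha^*) \leq T$.

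The final step translates $f(\tilde\alpha) \leq T$ into a bound on $\E{f(\tilde\alpha)} = s^2 \|p - q_{\tilde\alpha}\|_2^2$. Chebyshev cannot be applied pointwise at $\tilde\alpha$ because $\tilde\alpha$ is data-dependent, so I would instead establish a uniform concentration bound $\sup_{\alpha \in [0,1]} |f(\alpha) - \E{f(\alpha)}| \leq \tau$ with $\tau$ on the order of $s^2 \epsilon^2 / n$: writing $f(\alpha) - \E{f(\alpha)}$ as a quadratic $A\alpha^2 + B\alpha + C$ whose coefficients $A, B, C$ are each Poisson functionals of the samples, I bound $|A|, |B|, |C|$ individually via Chebyshev and use that $|f(\alpha) - \E{f(\alpha)}| \leq |A| + |B| + |C|$ for every $\alpha \in [0,1]$. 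Combining with $f(\tilde\alpha) \leq T$ then yields $\E{f(\tilde\alpha)} \leq T + \tau$, and the constants $c_s, \gamma, T$ are chosen so this right-hand side is at most $s^2 \epsilon^2/(4n)$; a union bound over the constant number of failure events delivers the claimed overall success probability of $0.94$. The main obstacle is precisely this uniform concentration step: reducing uniform control of $f(\alpha) - \E{f(\alpha)}$ to pointwise control on its three random coefficients by exploiting the degree-$2$ structure of $f$ in $\alpha$ is the crux of the argument, and carefully juggling the absolute constants in $c_s$, $\gamma$, $T$, and $\tau$ is where the analysis needs the most care.
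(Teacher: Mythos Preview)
Your first two steps---the moment computation giving $\E{f(\alpha)} = s^2\|p-q_\alpha\|_2^2$ and the Chebyshev bound $|f(\alpha^*)|\le T$---match the paper. The gap is in the third step, the uniform concentration argument.

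You propose to bound $\sup_{\alpha\in[0,1]}|f(\alpha)-\E{f(\alpha)}|$ by $\tau=O(s^2\gamma)$ via bounding the three centered coefficients. This fails with the stated sample size $s=\Theta(\sqrt b/\gamma)$. The $\alpha^2$-coefficient of $f$ is the statistic $A=\sum_i (Y_i-Z_i)^2-Y_i-Z_i$ with $\E{A}=s^2\|q_1-q_2\|_2^2$ and $\Var{A}\le 8s^3\|q_1-q_2\|_4^2\sqrt b+8s^2b$. When $\|q_1-q_2\|_2^2$ is large compared to $\gamma$ (which is exactly the interesting regime), Chebyshev with this many samples only gives the \emph{multiplicative} guarantee $A\in[0.9,1.1]\cdot\E{A}$; the additive deviation $|A-\E{A}|$ is of order $0.1\,s^2\|q_1-q_2\|_2^2$, which can exceed $T=s^2\gamma$ by an arbitrarily large factor. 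Hence your bound $|A|+|B|+|C|\le\tau$ with $\tau$ comparable to $T$ cannot hold, and you cannot pass from $f(\tilde\alpha)\le T$ to $\E{f(\tilde\alpha)}\le s^2\epsilon^2/(4n)$ this way.

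The paper sidesteps uniform additive concentration entirely. It first disposes of the case $\|q_1-q_2\|_2^2\le\epsilon^2/(4n)$ by taking $\alpha=0$. In the remaining case it establishes only multiplicative control: $A=c_A\,s^2\|q_1-q_2\|_2^2$ with $c_A\in[0.9,1.1]$, and an analogous statement for $B$ (this is where the assumption $\alpha^*\ge 1/2$, and hence the swap of $q_1,q_2$, enters). Then, rather than comparing $f(\hat\alpha)$ to $\E{f(\hat\alpha)}$, it expands
\[
f(\alpha^*)-f(\hat\alpha)=c_A\,s^2\|q_1-q_2\|_2^2\,(\alpha^*-\hat\alpha)(\alpha^*+\hat\alpha-2\alpha_{\min}),
\]
bounds the left side by $2T$, and uses the location of $\hat\alpha$ relative to $\alpha_{\min}$ to replace the second linear factor by $|\alpha^*-\hat\alpha|$. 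This yields $(\alpha^*-\hat\alpha)^2\|q_1-q_2\|_2^2\le 2T/(0.9\,s^2)$ directly, and the left side is exactly $\|p-q_{\hat\alpha}\|_2^2$. The point is that the potentially large factor $\|q_1-q_2\|_2^2$ appears on both sides and is never asked to be small; only multiplicative control on $A$ is needed. Your argument, by contrast, needs the random quadratic to track its mean additively to within $O(T)$ uniformly, which the sample budget does not support.
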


\begin{proof}\; We consider two cases based on the $\ell_2$-distance of $q_1$ and $q_2$.  Suppose $\|q_1 - q_2\|_2^2 \leq \epsilon^2/(4\,n)$. If $p$ is a mixture distribution with parameter $\alpha^*$, then we have:
\vspace{-4mm}
\begin{align*}
\|q_1 - p\|_2^2 & = \sum\limits_{i=1}^n \left(q_1(i) - (1-\alpha^*) q_1(i) - \alpha^*\, q_2(i)\right)^2 = \sum\limits_{i=1}^n {\alpha^*}^2 \cdot \left(q_1(i) - q_2(i)\right)^2 
\\ & = {\alpha^*}^2 \cdot \|q_1 - q_2\|_2^2 \leq \|q_1 - q_2\|_2^2 \leq \frac{\epsilon^2}{4 \, n}\,.
\end{align*}
\vspace{-3mm}

\noindent
Hence, $q_1$, which is a mixture distribution with parameter $\alpha = 0$, is a candidate.

We now focus on the case $\|q_1 - q_2\|_2^2 \geq \epsilon^2/(4\, n)$.  Without loss of generality,  $\alpha^* > 1/2$ (otherwise swap $q_1$ and $q_2$).  Fixing $X, Y, Z$, we write (\ref{eq:cls_stat}) as $f(\alpha) = A\alpha^2 + B\alpha + C$, 
where

\begin{equation}\label{eq:A_B_C}
\begin{split}
A  \coloneqq  \sum_{i=1}^n (Y_i - Z_i)^2  - Z_i & - Y_i, \quad
B  := 2 \sum_{i=1}^n Y_i + X_i Y_i +   Y_i Z_i -  Y_i^2-  X_i Z_i, 
\\
& C  \coloneqq \sum_{i=1}^n (X_i - Y_i)^2 - X_i - Y_i.
\end{split}
\end{equation}

As explained earlier, the idea is to use $f(\alpha)$ to find a proper candidate $\alpha$.

We now study the properties of $A$ and $B$.  It turns out that $A$ is the same as the statistic for testing  closeness where $\poi(s)$ samples are drawn from $q_1$ and $q_2$. From \cite{ChanDVV14}, 
$$
\E[X, Y, Z]{A} = s^2 \|q_1 - q_2\|_2^2 \quad \mbox{ and } \quad \Var[X, Y, Z]{A} \leq 8 s^3\, \|q_1 - q_2\|_4^2 \, \sqrt b + 8 \, s^2 \,b\,.
$$
In the following lemma, we show how statistic with similar expected value and variance to $A$ will concenterate:

\begin{restatable}{lemma}{lemLTwoEstimator}\label{lem:l2_estimator}
[Adapted from \cite{ChanDVV14}]
Assume a random variable, namely $R$, has the following properties:
\begin{equation}
    \E{R} = c_1 s^2 \|q_1 - q_2\|_2^2, \quad \quad \Var{R} \leq c_2 s^3 \|q_1 - q_2\|_4^2 \sqrt b + c_3 s^2 b,
\end{equation} where $c_1$, $c_2$, and $c_3$ are three positive constants,  $s$ is an integer,  $q_1$ and $q_1$ are two distributions over $[n]$, and $b$ is a real number which is greater than  $\|q_1\|_2^2$ and $\|q_2\|_2^2$. If $s$ is at least $c \cdot \sqrt b/\tau$ for sufficiently large $c$,
then with probability 0.99 the following is true:
\begin{itemize}
    \item If $\|q_1 - q_2\|_2^2$ is at most $\tau$, then $|R|$ is at most $2\,c_1\,\tau \,s^2$.
    \item If $\|q_1 - q_2\|_2^2$ is at least $\tau$, then $R$ is between $0.9 \cdot \E{R}$ and $1.1 \cdot \E{R}$.
\end{itemize}
\end{restatable}
 
Thus, using the above lemma, we show that 
with probability 0.99, there is a constant $c_A \in [0.9, 1.1]$ such that 
\begin{equation}\label{eq:A}
    A = c_A \cdot s^2 \|q_1 - q_2\|_2^2.
\end{equation} 
$B$ might not have a nice closed-form expression 
when $p$ is an arbitrary distribution, but when $p$ is a mixture, 
it has the following property.
\begin{restatable}{lemma}{lemB}
\label{lem:B}
Suppose $p$ is a mixture of $q_1$ and $q_2$ with parameter $\alpha^* \geq 1/2$. Let $B$ be a function of the sample sets $X$, $Y$, and $Z$ as defined in (\ref{eq:A_B_C}) and let $\gamma < \|q_1-q_2\|_2^2$. If the sample sets, $X$, $Y$, and $Z$, each have $\Theta(\sqrt b/\gamma)$ samples, then with probability 0.99, there exists $c_B \in [0, 1]$ such that
\begin{equation} \label{eq:B}
B = -2 \, c_B \cdot \alpha^* \|q_1 - q_2\|_2^2 \,.
\end{equation}
\end{restatable}
We now analyze $f(\alpha)$ for a fixed $\alpha$.  (The following in fact holds for any distribution $p$ over $[n]$.)
\begin{restatable}{lemma}{lemEVarf}\label{lem:Evarf}
For a fixed $\alpha$,
$$
\E[X, Y, Z] {f(\alpha)} = s^2 \cdot \| p - q_\alpha \|_2^2
\quad \mbox{ and } \quad 
\Var[X, Y, Z] {f(\alpha)} \leq 
8 \, s^3\cdot  \sqrt{b}\cdot \|p - q_\alpha\|_4^2  + 8
\, s^2 \cdot b
\, .
$$
\end{restatable}
By Lemma~\ref{lem:Evarf} and  Lemma~\ref{lem:l2_estimator}, with probability 0.99,  if $p = (1-\alpha^*)q_1 + \alpha^* q_2$, then 
    \begin{equation}\label{eq:f_alpha_star}
    |f(\alpha^*)| \leq s^2 \gamma = T.
\end{equation}
With probability 0.97, all of (\ref{eq:A}), (\ref{eq:B}), and (\ref{eq:f_alpha_star}) hold; we condition on this from now on.

Since $f(\alpha)$ is a quadratic and since $A > 0$ from (\ref{eq:A}), let $\alpha_{\min} = -B/(2A)$ where $f$ achieves its minimum.  We define $\hat \alpha_1$ and $\hat \alpha_2$ as follows:
\begin{equation}
\label{eq:hat_alpha_1_2}
        \hat \alpha_1 = 
        \underset{\alpha \in [\alpha_{\min},1], f(\alpha) \leq T}{\arg\min}
        f(\alpha), 
    \quad \mbox{ and } \quad
    \hat \alpha_2 = \underset{\alpha \in [0, \alpha_{\min}], f(\alpha) \leq T}{\arg\min} f(\alpha) \,.
\end{equation}
Note that (\ref{eq:f_alpha_star}) guarantees that either $\hat \alpha_1$ or $\hat \alpha_2$ exists depending on if $\alpha^* > \alpha_{\min}$ or not; they can also be found very efficiently by binary search.  It remains to show that one of $q_{\hat \alpha_1}$ and $q_{\hat \alpha_2}$ is very close to $p$ in $\ell_2$-distance. 
\begin{restatable}{lemma}{lemGoodAlpha}
\label{lem:good_alpha}
We have either 
$
\|p - q_{\hat\alpha_1}\|_2 \leq \frac{2 \,T}{0.9 \, s^2}$ or $\|p - q_{\hat\alpha_2}\|_2 \leq \frac{2 \,T}{0.9 \, s^2}.$
\end{restatable}
Note that by choice of our parameters, we have $2\,T/(0.9 s^2) < \epsilon^2/(4\,n)$. Hence, either $q_{\hat \alpha_1}$ or $q_{\hat \alpha_2}$ is a candidate.  Thus our potential candidates so far are $\alpha = 0$, $q_{\hat \alpha_1}$, and $q_{\hat \alpha_1}$. In addition, given our assumption for $\alpha^*>1/2$, we need to compute the corresponding ${\hat \alpha_1}$ and ${\hat \alpha_2}$ when $q_1$ and $q_2$ are swapped. Hence, we have at most five candidates for $\alpha$. 
\end{proof}

\subsection{Mixture closeness tester} \label{sec:mix_cls_alg}
In this section, we provide our algorithm and prove its correctness in the following theorem.

\begin{theorem}\label{thm:UB_mixture_closeness} Given a proximity parameter $\epsilon$, 
Algorithm~\ref{alg:closeness} is an closeness tester in the presence of noise that is accessible via samples and it uses $\Theta(\sqrt{n}/\epsilon^2 + n^{2/3}/\epsilon^{4/3})$ samples.
\end{theorem}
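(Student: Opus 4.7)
The plan is to combine the candidate-finding subroutine of Section~\ref{sec:prp_candidates} with a standard $\ell_{2}$-closeness tester. First, invoke the procedure guaranteed by Theorem~\ref{thm:proper_candidates} to compute, from sample sets $X,Y,Z$ of size $s=\Theta(\sqrt{b}/\gamma)$ (with $\gamma=\epsilon^{2}/(10n)$), a set $\mathcal{M}$ of at most five candidate mixing weights $\alpha$. For each $\alpha\in\mathcal{M}$, simulate sample access to $q_{\alpha}=(1-\alpha)q_{1}+\alpha q_{2}$ by flipping a biased coin and returning a fresh sample from $q_{1}$ or $q_{2}$ accordingly, and run the robust $\ell_{2}$ closeness tester of \cite{ChanDVV14} on $p$ versus $q_{\alpha}$ with threshold $\epsilon/(2\sqrt{n})$ vs.\ $\epsilon/\sqrt{n}$. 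Algorithm~\ref{alg:closeness} accepts iff at least one of the (at most five) calls accepts.

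\medskip

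For completeness, assume $p=(1-\alpha^{*})q_{1}+\alpha^{*}q_{2}$. Theorem~\ref{thm:proper_candidates} gives with probability at least $0.94$ some $\alpha\in\mathcal{M}$ with $\|p-q_{\alpha}\|_{2}\le\epsilon/(2\sqrt{n})$, so the $\ell_{2}$ tester accepts on that candidate with high probability, and the algorithm accepts overall. For soundness, assume $p$ is $\epsilon$-far in $\ell_{1}$ from every mixture of $q_{1},q_{2}$. Then for \emph{every} $\alpha\in[0,1]$, in particular for every $\alpha\in\mathcal{M}$, we have $\|p-q_{\alpha}\|_{1}\ge\epsilon$, and since the domain has size $n$, the Cauchy--Schwarz inequality gives $\|p-q_{\alpha}\|_{2}\ge\|p-q_{\alpha}\|_{1}/\sqrt{n}\ge\epsilon/\sqrt{n}$. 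Thus the $\ell_{2}$ tester rejects every candidate, and the algorithm rejects. A union bound over the $O(1)$ events controls the overall error by $1/3$.

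\medskip

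For the sample complexity, there are two regimes. When $\|p\|_{2}^{2},\|q_{1}\|_{2}^{2},\|q_{2}\|_{2}^{2}$ are all $O(1/n)$, the parameter $b=O(1/n)$ may be used, and both the candidate-finding step ($\Theta(\sqrt{b}/\gamma)=\Theta(\sqrt{n}/\epsilon^{2})$ samples) and the $\ell_{2}$ tester (also $\Theta(\sqrt{n}/\epsilon^{2})$ samples) fit within $O(\sqrt{n}/\epsilon^{2})$ samples. For general distributions, we use the flattening trick of \cite{DiakonikolasK:2016}: draw an additional batch of $m$ samples to partition each distribution's heavy elements into multiple copies, reducing the effective $\ell_{2}$ norms to $O(1/(n+m))$ in expectation, and then run the above procedure on the flattened versions. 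Optimizing $m$ yields the well-known $O(n^{2/3}/\epsilon^{4/3})$ bound, and combining the two regimes gives the claimed $O(\sqrt{n}/\epsilon^{2}+n^{2/3}/\epsilon^{4/3})$ upper bound. The matching lower bound is inherited from Proposition~\ref{prop:lb_unknown_q_U}, which shows that even the special case where $q_{2}=\mathcal{U}$ is known is no easier than standard closeness testing.

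\medskip

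The main obstacle is the flattening step: we must verify that after flattening with a shared random seed applied consistently to $p$, $q_{1}$, and $q_{2}$, the candidate-finding procedure and its analysis (Lemmas~\ref{lem:l2_estimator}, \ref{lem:B}, \ref{lem:Evarf}, and \ref{lem:good_alpha}) still go through on the flattened domain, and that the relation ``$p$ is a mixture of $q_{1},q_{2}$'' is preserved under the same flattening. The former follows because the analysis is generic in the domain and the bound $b$, while the latter is immediate since flattening is a linear and deterministic (given the seed) transformation of the probability vectors. With these two points in hand, the sample counts compose to give the stated complexity.
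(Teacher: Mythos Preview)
Your proposal is correct and follows essentially the same route as the paper: flatten, run the candidate finder of Theorem~\ref{thm:proper_candidates} on the flattened distributions, then run the $\ell_2$ closeness/estimation subroutine of \cite{ChanDVV14} on $p'$ versus each $q'_\alpha$, accepting iff some candidate passes. Your completeness/soundness arguments, the observation that flattening is linear and hence preserves ``$p$ is a mixture of $q_1,q_2$'', and the appeal to Proposition~\ref{prop:lb_unknown_q_U} for the lower bound all match the paper.

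One quantitative slip to fix: after flattening with $m$ samples (from each distribution, using a \emph{shared} seed built from samples of all three), the expected squared $\ell_2$ norm of each flattened distribution is $O(1/m)$, not $O(1/(n+m))$; the quantity $\Theta(n+m)=\Theta(n)$ is the new \emph{domain size}, not the norm bound. With $b=\Theta(1/m)$ the candidate finder and the $\ell_2$ tester each use $\Theta(n\sqrt{b}/\epsilon^2)=\Theta\bigl(n/(\sqrt{m}\,\epsilon^2)\bigr)$ samples, so the total is $\Theta\bigl(m+n/(\sqrt{m}\,\epsilon^2)\bigr)$, optimized at $m=\Theta(n^{2/3}/\epsilon^{4/3})$ (capped at $m=n$, which yields the $\sqrt{n}/\epsilon^2$ regime). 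Your stated bound $O(1/(n+m))$ would incorrectly make $m=0$ optimal; since you already quote the correct final answer this is clearly just a typo, but it should be corrected in the writeup.
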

\inCOLT{
\begin{wrapfigure}{L}{0.5\textwidth}
\fbox{
\begin{minipage}{0.48\textwidth}
\begin{algorithm}[H]
\DontPrintSemicolon
\SetKwInOut{Input}{Procedure}
\floatconts{alg:closeness}
{\caption{Closeness tester in the  presence \, \,\, \, \, \, of  noise that is accessible via samples. }}
{
{\vspace{3mm}}
{{\bf Procedure:} \textsc{Mixture-Closeness-}} \\
{\hspace*{0.5cm}\textsc{Tester}
($n$, $\epsilon$, sample access to $p$, $q_1$, $q_2$)} \\
    {$k \gets \Theta \left( \min \left(n, {n^{2/3}}/{\epsilon^{4/3}}\right)\right)$ samples to reduce the $\ell_2$-norm of the distribution.} \\
    {$p'$, $q'_1$, $q'_2 \gets $ $p$, $q_1$, and $q_2$ after flattening.} \\
    {$b \gets \Theta\left({1}/{k}\right)$}\\
    {$s \gets \Theta(n \cdot \sqrt b/\epsilon^2)$}\\
    {$X, Y, Z \gets $ $\poi(s)$ samples from each distribution $p'$, $q'_1$, and $q'_2$.}\\
    {$\CC \gets $ set of candidates}\\
    \For{$\alpha \in \CC$}{
        \If {\textsc{$\ell_2^2$-Dist-Estimator}$\left(b, \frac{\epsilon^2}{4\,n}, q'_\alpha, p'\right) \leq \frac{\epsilon^2}{2\,n}$ 
        }
            {\KwRet{\accept}}
    }
    {\KwRet{\reject}}
    {\vspace{3mm}}
}
\end{algorithm}
\end{minipage}
}
\end{wrapfigure}
}{\begin{algorithm}[ht]
\caption{Closeness tester in the presence of  noise that is accessible via samples.}
\label{alg:closeness}
\begin{algorithmic}[1]
\Procedure{Mixture-Closeness--Tester}{$n$, $\epsilon$, sample access to $p$, $q_1$, $q_2$}
    \State{$k \gets \Theta \left( \min \left(n, {n^{2/3}}/{\epsilon^{4/3}}\right)\right)$ samples to reduce the $\ell_2$-norm of the distribution.}
    \State{$p'$, $q'_1$, $q'_2 \gets $ $p$, $q_1$, and $q_2$ after flattening.}
    \State{$b \gets \Theta\left({1}/{k}\right)$}
    \State{$s \gets \Theta(n \cdot \sqrt b/\epsilon^2)$}
    \State{$X, Y, Z \gets $ $\poi(s)$ samples from each distribution $p'$, $q'_1$, and $q'_2$.}
    \State{$\CC \gets $ set of candidates}
    \For{$\alpha \in \CC$}{
        \If {\textsc{$\ell_2^2$-Dist-Estimator}$\left(b, \frac{\epsilon^2}{4\,n}, q'_\alpha, p'\right) \leq \frac{\epsilon^2}{2\,n}$ 
        }
            \State{\Return{\accept}}
        \EndIf
    }
    \EndFor
    \State{\Return{\reject}}
\EndProcedure
\end{algorithmic}
\end{algorithm}
}
\begin{proof}\;  We reduce the $\ell_2$-norm of the three input distributions via the 
reshaping technique proposed in \cite{DiakonikolasK:2016}. 
Let $S$ be a multi-set consisting of $3\,k$ samples, where $k$ samples are chosen from each distribution $p$, $q_1$, and $q_2$. For $i \in [n]$, we assign $b_i$ buckets to element $i$ where $b_i$ is the number of instances of element $i$ in set $S$ plus one. For a  distribution $d$ over $[n]$, we define $d'$ to be a distribution over all the buckets, $D \coloneqq \{(i,j)~\mid~i \in [n] \mbox{ and } j \in [b_i]\}$. We generate a sample from $d'$ via the following process: (i) draw a sample $i \sim d$, (ii) pick $j \in [b_i]$ uniformly at random, and (iii) output $(i,j)$. The probability of any element $(i,j)$ according to $d'$ is $d(i)/b_i$. It is known that flattening does not change the $\ell_1$-distance between two distributions.  Let $p'$, $q'_1$, and $q'_2$ be the distributions $p$, $q_1$, and $q_2$ after flattening. We show that a mixture distribution will remain a mixture after flattening. More precisely, if $p$ is a mixture of $q_1$ and $q_2$ with parameter  $\alpha^*$, then it is easy to see that $p'$ is a mixture of distributions $q'_1$ and $q'_2$ with the same parameter $\alpha^*$.
Thus, it suffices to test if $p'$ is a mixture of $q'_1$ and $q'_2$.

By setting $k = \Theta(\min(n, n^{2/3}/\epsilon^{4/3}))$, according to \cite[Lemma II.6]{DiakonikolasK:2016} and Markov's inequality, we can assume the $\ell_2$-norms of all three distributions $p'$, $q'_1$, and $q'_2$ are at most $b$ with probability at least $0.99$, where we set $b = 1/\min(n, n^{2/3}/\epsilon^{4/3}) = \Theta(1/k)$ . Also, note tht $|D| = \Theta(n)$. 

Given Theorem~\ref{thm:proper_candidates}, one can find a set $\MM$ of at most five candidates. If $p'$ is a mixture of $q'_1$ and $q'_2$, then there is an $\alpha \in \MM$ such that $\| q'_{\alpha} - p' \|_2 \leq \epsilon/(2\sqrt{|D|})$.  On the other hand, if $p'$ is $\epsilon$-far from being a mixture, it is also $\epsilon$-far from all $\alpha \in \MM$; using the Cauchy--Schwarz inequality, we have $\|q'_{\alpha} - p'|_2 \geq \epsilon/\sqrt{|D|}$. Note that \cite{ChanDVV14} showed one can estimate the $\ell_2$-distance accurately using $\Theta(|D| \cdot \sqrt b/\epsilon^2)$ samples and with probability 0.99 (see Lemma~\ref{lem:l22} in Section~\ref{sec:closeness_lemmas}.)  

By a union bound, the probability that $\MM$ does not contain the right $\alpha$, the probability that the $\ell_2$ estimation fails, and the probability that $\poi(\lambda) < 100 \lambda$ sum up to below $1/3$.  
Hence, with probability 2/3, the algorithm outputs the right answer and the total number of samples is $\Theta(k + n \sqrt{b}/\epsilon^2) = \Theta(\sqrt{n}/\epsilon^2 + n^{2/3}/\epsilon^{4/3})$. 
\end{proof}

\subsection{Proofs for Section~\ref{sec:prp_candidates} and Section~\ref{sec:mix_cls_alg}} \label{sec:closeness_lemmas}

In this section, we present the proofs of the lemmas presented earlier in Section~\ref{sec:prp_candidates} and Section~\ref{sec:mix_cls_alg}.

\lemLTwoEstimator*
\begin{proof}
We use Chebyshev's inequality to prove the lemma. For the first case, by the $\ell_p$-norms inequality, we have the following:
\begin{align*}
    \Pr{|R - \E{R}| \geq c_1\,\tau s^2} & \leq \frac{\Var{R}}{c_1^2\tau^2 s^4} \leq \frac{c_2 \|q_1 - q_2\|_4^2 \sqrt b}{c_1^2 \tau^2 s} + \frac{c_3 \, b}{c_1^2 \tau^2 s^2} 
\\
    & \leq \frac{c_2 \|q_1 - q_2\|_2^2 \sqrt b}{c_1^2 \tau^2 s} + \frac{c_3 \, b}{c_1^2 \tau^2 s^2} \leq \frac{c_2 \sqrt b}{c_1^2 \tau\, s} + \frac{c_3 \, b}{c_1^2 \tau^2 s^2} \leq 0.01,
\end{align*}
where the last inequality is true when $s \geq  \max(200 \, c_2/c_1^2, \sqrt{200 \, c_3}/c_1) \sqrt  b/\tau$.

For the second case, we have the following:
\begin{align*}
    \Pr{|R - \E{R}| \geq 0.1 \cdot \E{R}} & \leq \frac{100 \, \Var{R}}{\E{R}^2} \leq \frac{100 \,c_2 \|q_1 - q_2\|_4^2 \sqrt b}{c_1^2 \, s \,\|q_1 - q_2\|_2^4  } + \frac{100 \,c_3 \, b}{c_1^2 \, s^2 \,\|q_1 - q_2\|_2^4  }
\\
    & \leq \frac{100 \,c_2 \sqrt b}{c_1^2 \, s \,\|q_1 - q_2\|_2^2  } + \frac{100 \,c_3 \, b}{c_1^2 \, s^2 \,\|q_1 - q_2\|_2^4  }
\\ 
    & \leq \frac{100 \,c_2 \sqrt b}{c_1^2 \tau\, s} + \frac{100 \,c_3 \, b}{c_1^2 \tau^2 s^2}
    \leq 0.01,
\end{align*}
where the last inequality is true when $s \geq  \max(20000 \, c_2/c_1^2, \sqrt{20000 \, c_3}/c_1) \sqrt  b/\tau$.  This completes the proof.
\end{proof}

\lemB*
\begin{proof}
Recall that $B$ is defined to be $2 \sum_{i=1}^n Y_i + X_i Y_i +   Y_i Z_i -  Y_i^2-  X_i Z_i$. To analyze the expected value and the variance of $B$, we consider each terms in the sum. Let $B_i$ denote a single term in the sum after ignoring constant 2:
$$
B_i \coloneqq Y_i + X_i Y_i +   Y_i Z_i -  Y_i^2-  X_i Z_i \,.
$$
Note that via the Poissonization method, the $X_i$'s, the $Y_i$'s, the $Z_i$'s, and consequently the $B_i$'s are independent random variables. Note that if $x$ is a Poisson random variable with mean $\lambda$, then 
$\E{x^2}$ is $\lambda^2 + \lambda$. 
Using this equation, we compute the expected value of $B_i$:
\begin{align*}
\E[X, Y, Z] {B_i} & = - s^2 \left( q_1(i)^2 + p(i) q_1(i) + q_1(i) q_2(i) - p(i) q_2(i)\right)
\\ & = - \alpha^
* s^2 (q_1(i) - q_2(i))^2 \,.
\end{align*}
Thus, the expected value of $-B$ is the following:
$$\E[X, Y, Z]{-B}= - \sum\limits_{i=1}^n 2 B_i= \sum\limits_{i=1}^n 2\alpha^* s^2 (q_1(i) - q_2(i))^2 = 2 \alpha^* s^2 \|q_1 - q_2\|_2^2,$$
where $2\alpha^*$ is a constant between $[1, 2]$. Using the first four moments of the Poisson distribution and the fact that $\alpha \leq 1$, we have the following: 
\begin{align*}
\Var[X,Y,Z]{B_i} & = \E[X,Y,Z]{B_i^2} - \E[X,Y,Z]{B_i}^2
\\ & = 
s^3 \, \alpha^* ( 1 + \alpha^*) \, (q_1(i)-q_2(i))^2 (q_1(i)+q_2(i)) +
2 \, s^3 \, (q_1(i)-q_2(i))^2 \, q_1(i) 
\\ & + 
s^2  \left(\alpha^* (q_2(i)^2 - q_1(i)^2)  + q_1(i) \, (3q_1(i) + 2q_2(i))\right)
\\ & \leq
4 \, s^3 \, (q_1(i) + q_2(i)) (q_1(i)- q_2(i))^2 + s^2 \, (q_1(i) + q_2(i))^2  + 3\, s^2 \, q_1(i)^2
\,.
\end{align*}
Using the bound above and the Cauchy--Schwarz inequality, we bound the variance of $B$ as follows:
\begin{align*}
\Var[X,Y,Z]{B} & = 4 \sum_{i=1}^n \Var[X,Y,Z]{B_i} 
\\ & \leq 
16 \, s^3 \, \sum_{i=1}^n (q_1(i) + q_2(i)) (q_1(i)- q_2(i))^2 + 8 \|q_2\|_2^2 + 20 \, s^2 \|q_1\|_2^2
\\ & \leq  
16 s^3 \sqrt{\left(\sum_{i=1}^n  (q_1(i) - q_2(i))^4 \right) \cdot  \left(\sum_{i=1}^n (q_1(i) + q_2(i))^2 \right)} + 28 \,s^2 b
\\ & \leq 
32 \, s^3 \|q_1-q_2\|_4^2 \sqrt{b} + 28
\, s^2 \, b
\,.
\end{align*}
Clearly, the variance of $-B$ is equal to the variance of $B$, and it is bounded the same as above. Note that $\gamma $ is at most $\|q_1-q_2\|_2^2$, and the sample sets, $X$, $Y$, and $Z$, each have at least $\Theta(\sqrt b/\gamma)$ samples.
By Lemma~\ref{lem:l2_estimator}, 
there exists $c_B \in [0, 1]$ such that
$$
- B = 2 \, c_B \, \alpha^* \|q_1 - q_2\|_2^2, 
$$
with probability 0.99 which concludes the proof. 
\end{proof}

\lemEVarf*
\begin{proof} In this proof, we adapt the proof of Proposition 3.1 from \cite{ChanDVV14}. Recall that 
$$f(\alpha) = \sum\limits_{i=1}^n (X_i - (1-\alpha)Y_i - \alpha Z_i)^2 - X_i - (1-\alpha)^2 Y_i - {\alpha}^2 Z_i.$$
Via the Poissonization method, we can assume $X_i$ (similarly $Y_i$ and $Z_i$) is a random variable from $\Poi(s \, p(i))$ (similarly $\Poi(s \, q_1(i))$ and $\Poi(s \, q_2(i))$), which is drawn independently from the rest of the random variables. Note that if $x$ is a Poisson random variable with mean $\lambda$, then 
$\E{x^2}$ is $\lambda^2 + \lambda$. 
Using this equation and the independence of the random variables, for a fixed $\alpha$, we have:
$$
\E[X, Y, Z] {f(\alpha)} = s^2 \cdot \| p - q_\alpha \|_2^2 \, .
$$

Now, we bound the variance of $f(X,Y,Z, \alpha)$ for a fixed $\alpha$.
Let $W_i$ denote a single term in the summation:
$$
W_i \coloneqq \left(X_i - (1-\alpha)Y_i - \alpha Z_i\right)^2 - X_i - (1-\alpha)^2 Y_i - \alpha^2 Z_i \, .$$
Using the moments of the Poisson distribution, we have 
\begin{align*}
\Var[X, Y, Z]{W_i} & = \E[X, Y, Z]{W_i^2} - \E[X, Y, Z]{W_i}^2 
\\ & = 
4 \, s^3\, (p(i) - (1 - \alpha) q_1(i) - \alpha \, q_2(i))^2\cdot (p(i) + (1 - \alpha)^2 q_1(i) + \alpha^2 q_2(i)) 
\\ & + 2\, s^2\, (p(i) + (1 - \alpha)^2 q_1(i) + \alpha^2 r
(i))^2
\\ & \leq 
4 \, s^3\, (p(i) - (1 - \alpha) q_1(i) - \alpha \, q_2(i))^2\cdot (p(i) + (1 - \alpha) q_1(i) + \alpha q_2(i)) 
\\ & + 2\, s^2\, (p(i) + (1 - \alpha) q_1(i) + \alpha q_2(i))^2
\\ &= 
4 \, s^3\, (p(i) - q_{\alpha}(i))^2\cdot (p(i) + q_{\alpha}(i)) 
 + 2\, s^2\, (p(i) + q_{\alpha}(i))^2.
\end{align*}
Now, we bound the variance of $f$ which is the sum of $n$ independent terms, $W_i$'s. Using the Cauchy--Schwarz inequality, and the fact that $(p(i) + q_{\alpha}(i))^2$ is at most $2 p(i)^2 + 2 q_{\alpha}(i)^2$, we have
\begin{align*}
\Var[X, Y, Z] {f(\alpha)} & = \sum_{i=1}^n \Var[X, Y, Z] {W_i}
\\ & \leq  
4 \, s^3\, (p(i) - q_{\alpha}(i))^2\cdot (p(i) + q_{\alpha}(i)) 
 + 2\, s^2\, (p(i) + q_{\alpha}(i))^2 
\\ & \leq 
4 \, s^3 \, \sqrt{\left(\sum_{i=1}^n  (p(i) - q_{\alpha}(i))^4 \right) \cdot  \left(\sum_{i=1}^n (p(i) + q_{\alpha}(i))^2 \right)}
+ 4 \, s^2 \left(\|p
\|_2^2 + \|q_\alpha\|_2^2\right)
\\ & \leq 
8 \, s^3 \cdot \|p - q_\alpha\|_4^2 \cdot \sqrt{b} + 8
\, s^2 \, b \,,
\end{align*}
where $b$ is at least $\|p\|_2^2$ and $\|q_\alpha\|_2^2$ by the first condition of the theorem. 
\end{proof}

\lemGoodAlpha*
\begin{proof}
Consider the statistic as a function of $\alpha$: $f(\alpha) = A \alpha^2 + B \alpha + C$. Since $A$ is positive, $f$ takes its minimum at $\alpha_{\min} \coloneqq (-B)/2A$. By Equation~\ref {eq:A} and Equation~\ref{eq:B},  $\alpha_{\min}$ is  $c_B \alpha^*/c_A$, and for any $\alpha$, we have:
\begin{equation} \label{eq:delta_f}
\begin{split}
f(\alpha^*) - f(\alpha) & = A ({\alpha^*}^2 - \alpha^2) + B ({\alpha^*} - \alpha)
\\ & = c_A \, s^2 \, \|q_1 - q_2\|_2^2 \, ({\alpha^*} - \alpha) \left(
{\alpha^*} + \alpha - \frac{2 \alpha^* (c_B)}{c_A}
\right)
\\ & = c_A \, s^2 \, \|q_1 - q_2\|_2^2 \, ({\alpha^*} - \alpha) \left(
{\alpha^*} + \alpha -2 \alpha_{\min}
\right).
\end{split}
\end{equation}

Depending on whether $\alpha^*$ is larger than  $\alpha_{\min}$ or not, we consider the following cases.

\noindent
\textit{Case 1: $\pmb {\alpha^* \geq \alpha_{\min}}$.} Let $\hat \alpha_1$ be the smallest number in $[\alpha_{\min}, 1]$ for which $|f(\hat \alpha_1 )|$ is at most $T$. Clearly, $\hat \alpha_1$ exists since $\alpha^*$ is a potential solution, so the solution interval is not empty. Note that based on the way we pick $\hat \alpha_1$, the following are true: (i) $\hat \alpha_1$ is at most $\alpha^*$, (ii) $f(\hat \alpha_1)$ is at least $- T$, and (ii) since $A$ is positive, and $f$ is increasing over $[\alpha_min, 1]$, then $f(\hat \alpha_1)$ is at most $f(\alpha^*)$. Hence, by Equation~\ref{eq:delta_f}, we have:
$$0 \leq f(\alpha^*) - f(\hat \alpha_1) = c_A \, s^2 \, \|q_1 - q_2\|_2^2 \, ({\alpha^*} - \hat \alpha_1) \left(
{\alpha^*} + \hat \alpha_1 -2 \alpha_{\min} \right) \leq 2 \, T \,.$$
If we replace ${\alpha^*} + \hat \alpha_1 - 2\alpha_{\min}$ by a smaller quantity ,$\alpha^* - \hat \alpha_1$, where both are positive then we have:
\begin{equation}\label{eq:cls_bounding_l2_case1}
s^2 \, \|q_1 - q_2\|_2^2 \, ({\alpha^*} - \hat \alpha_1)^2 \leq \frac{2 \, T}{c_A} \leq \frac 2{0.9} \, T  \,.
\end{equation}

\noindent
\textit{Case 2: $\pmb {\alpha^* \leq \alpha_{\min}}$.} We  replicate what we did in the previous case.  Let $\hat \alpha_2$ be the largest number in $[0, \alpha_{\min}]$ for which $|f(\hat \alpha_2 )|$ is at most $T$. Clearly, $\hat \alpha_2$ exists since $\alpha^*$ is a potential solution, so the solution interval is not empty. Note that based on the way we pick $\hat \alpha_2$, the following are true: (i) $\hat \alpha_2$ is at least $\alpha^*$, (ii) $f(\hat \alpha_2)$ is at least $- T$, and (iii) since $A$ is positive, and $f$ is decreasing  over $[0, \alpha_min]$, then $f(\hat \alpha_2)$ is at most $f(\alpha^*)$. Hence, by Equation~\ref{eq:delta_f}, we have:
\begin{align*}
    0 \leq f(\alpha^*) - f(\hat \alpha_2) & = c_A \, s^2 \, \|q_1 - q_2\|_2^2 \, ({\alpha^*} - \hat \alpha_2) \left(
{\alpha^*} + \hat \alpha_2 -2\alpha_{\min} \right)
\\ & = c_A \, s^2 \, \|q_1 - q_2\|_2^2 \, (\hat \alpha_2 - {\alpha^*}) \left(2\alpha_{\min} - {\alpha^*} - \hat \alpha_2 \right) \leq 2 \, T \,.
\end{align*}
If we replace $2\alpha_{\min}{\alpha^*}-\hat \alpha_2 $ by a smaller quantity, $\hat \alpha_2 - \alpha^* $, where both are positive, then we have:
\begin{equation}\label{eq:cls_bounding_l2_case2}
s^2 \, \|q_1 - q_2\|_2^2 \, ({\alpha^*} - \hat \alpha_2)^2 \leq \frac{2 \, T}{c_A} \leq \frac 2{0.9} \, T  \,.
\end{equation}

The left side of  Equation~\ref{eq:cls_bounding_l2_case1} and Equation~\ref{eq:cls_bounding_l2_case2} are in the form of  the $\ell_2$-distance between two mixture distributions $p$ and $q_{\hat \alpha}$ due to the following:
\begin{align} \label{eq:mixture-l2-norm}
\|p - q_{\hat \alpha}\|_2^2 & = \sum_{i=1}^n \left(p(i) - q_{\hat\alpha}(i)\right)^2 = \sum_{i=1}^n \left((1 - \alpha^*) \,q_1(i) + \alpha^* q_2(i) - (1-\hat \alpha) \, q_1(i) - \hat \alpha\, q_2(i) \right)^2
\\ & =  (\alpha^* - \hat \alpha)^2 \sum_{i=1}^n \left(q_1(i) - q_2(i) \right)^2 = (\alpha^* - \hat \alpha)^2 \|q_1 - q_2\|_2^2
\,.
\end{align}
Note that we are either in case 1 or case 2. So, on of the two equations, Equation~\ref{eq:cls_bounding_l2_case1}, Equation~\ref{eq:cls_bounding_l2_case2} has to be true. By Equation~\ref{eq:mixture-l2-norm}, of the following is true. 
$$
\|p - q_{\hat\alpha_1}\|_2 \leq \frac{2 \,T}{0.9 \, s^2} \quad \quad \quad \|p - q_{\hat\alpha_2}\|_2 \leq \frac{2 \,T}{0.9 \, s^2},
$$
which concludes the proof.
\end{proof}

\inCOLT{
\begin{algorithm}[t]
\DontPrintSemicolon
\SetKwInOut{Input}{Procedure}
\floatconts{alg:l2_estimator}
{\caption{An  algorithm for estimating the $\ell_2$-distance squared \cite{ChanDVV14}}}
{\Input{\textsc{$\ell_2^2$-Estimator}{($b, \sigma$, sample access to $r_1$ and $r_2$)}}
    {$s \gets \Theta(\sqrt b/\sigma)$)}\\
    {$X \gets $ Draw $s$ samples from $r_1$.}\\
    {$Y \gets $ Draw $s$ samples from $r_2$.}\\
    {\KwRet{$\frac{1}{s^2}\sum\limits_{i=1}^n (X_i - Y_i)^2 - X_i - Y_i$}}
}
\end{algorithm}
}
{
\begin{algorithm}[t]\label{alg:l2_estimator}
\caption{An  algorithm for estimating the $\ell_2$-distance squared \cite{ChanDVV14}}
\begin{algorithmic}[1]
\Procedure{$\ell_2^2$-Estimator}{$b, \sigma$, sample access to $r_1$ and $r_2$}
    \State {$s \gets \Theta(\sqrt b/\sigma)$)}
    \State {$X \gets $ Draw $s$ samples from $r_1$.}
    \State {$Y \gets $ Draw $s$ samples from $r_2$.}
    \State {\Return {$\frac{1}{s^2}\sum\limits_{i=1}^n (X_i - Y_i)^2 - X_i - Y_i$}}
\EndProcedure
\end{algorithmic}
\end{algorithm}
}

\begin{lemma}\label{lem:l22}[Restated from \cite{ChanDVV14}]
The procedure \textsc{$\ell_2^2$-Estimator $(b, \sigma, r_1, r_2)$} described in Algorithm~\ref{alg:l2_estimator}, that uses $\Theta(\sqrt b/\sigma)$ samples, has the following property with probability 0.99:
\begin{itemize}
    \item If $\|r_1 - r_2\|_2^2$ is at most $\sigma$, then $|R|$ is at most $2\,\sigma \,s^2$.
    \item If $\|r_1 - r_2\|_2^2$ is at least $\sigma$, then $R$ is between $0.9 \cdot \E{R}$ and $1.1 \cdot \E{R}$.
\end{itemize}
\end{lemma}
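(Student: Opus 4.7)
The plan is to recognize Lemma~\ref{lem:l22} as a direct instantiation of the general concentration tool Lemma~\ref{lem:l2_estimator}, which was already proved earlier in the section. All that needs to be verified is that the specific statistic returned by \textsc{$\ell_2^2$-Estimator}, namely $R = \sum_{i=1}^n (X_i-Y_i)^2 - X_i - Y_i$ built from $s$ Poissonized draws from $r_1$ and $r_2$, satisfies the expectation and variance hypotheses of that lemma with $c_1=1$ and absolute constants $c_2,c_3$.

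First I would compute $\E{R}$. Poissonization makes $X_i\sim\Poi(s\,r_1(i))$ and $Y_i\sim\Poi(s\,r_2(i))$ mutually independent, so $\E{(X_i-Y_i)^2}=\Var{X_i-Y_i}+(\E{X_i-Y_i})^2 = s(r_1(i)+r_2(i))+s^2(r_1(i)-r_2(i))^2$. Subtracting $\E{X_i+Y_i}=s(r_1(i)+r_2(i))$ and summing over $i$ gives $\E{R}=s^2\|r_1-r_2\|_2^2$, which matches the form $c_1\,s^2\|r_1-r_2\|_2^2$ with $c_1=1$.

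Next I would bound $\Var{R}$. By independence across coordinates, $\Var{R}=\sum_i \Var{(X_i-Y_i)^2-X_i-Y_i}$. Expanding using the central moments of the Poisson distribution up to order four, each per-coordinate variance is at most
\[
O\!\bigl(s^3(r_1(i)+r_2(i))(r_1(i)-r_2(i))^2 \;+\; s^2(r_1(i)+r_2(i))^2\bigr).
\]
This is precisely the bookkeeping already executed in the proof of Lemma~\ref{lem:Evarf} (specialized to the two-distribution comparison obtained by setting the mixture coordinate to $r_2$), so I would reuse that calculation rather than redo it. Summing, applying the Cauchy--Schwarz inequality to the cubic term, and using $\|r_1\|_2^2,\|r_2\|_2^2\le b$ yields $\Var{R}\le c_2\,s^3\|r_1-r_2\|_4^2\sqrt{b}+c_3\,s^2 b$ for absolute constants $c_2,c_3$.

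Having verified the hypotheses, I would apply Lemma~\ref{lem:l2_estimator} with $\tau=\sigma$. The sample count $s=\Theta(\sqrt{b}/\sigma)$ exceeds the threshold $c\sqrt{b}/\tau$ of that lemma for a sufficiently large hidden constant, and its conclusion immediately delivers the two cases of Lemma~\ref{lem:l22}: $|R|\le 2\sigma s^2$ whenever $\|r_1-r_2\|_2^2\le\sigma$, and $R\in[0.9\,\E{R},1.1\,\E{R}]$ whenever $\|r_1-r_2\|_2^2\ge\sigma$, each with probability at least $0.99$. The main obstacle is purely the variance computation, since it requires carefully expanding a quartic in two independent Poisson variables; but because the identical calculation already appears in the proof of Lemma~\ref{lem:Evarf}, the present proof reduces to citing it and plugging in the Poisson mean identities.
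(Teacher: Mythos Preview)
Your proposal is correct and follows essentially the same approach as the paper: verify that the statistic $R=\sum_i (X_i-Y_i)^2-X_i-Y_i$ satisfies the mean and variance hypotheses of Lemma~\ref{lem:l2_estimator}, then invoke that lemma with $\tau=\sigma$. The only cosmetic difference is that the paper cites \cite{ChanDVV14} directly for the mean/variance bounds, whereas you recompute the mean and point to the analogous variance calculation in Lemma~\ref{lem:Evarf}.
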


\begin{proof}\; 
We use the $\ell_2^2$-distance estimator proposed in \cite{ChanDVV14}. However, for the sake of completeness, we provide the process in Algorithm~\ref{alg:l2_estimator}. $X$ and $Y$ are two sample sets each containing $s$ samples from $r_1$ and $r_2$ respectively. 
Let $X_i$ and $Y_i$ indicate the numbers of samples in $X$ and $Y$ respectively. The authors showed that the expected value of the statistic $\sum_{i=1}^n (X_i - Y_i)^2 - X_i - Y_i$ is $s^2\|r_1 - r_2\|_2^2$, and the variance is bounded by $8 s^3\, \|r_1 - r_2\|_4^2 \, \sqrt b + 8 \, s^2 \,b$.  By Lemma~\ref{lem:l2_estimator}, if we draw $\Theta(\sqrt{b}/\gamma)$ samples, then the algorithm will have the desired property with probability 0.99.
\end{proof}

\section{Testing under \texorpdfstring{$k$}{k}-flat noise}\label{sec:kFlat}

We have so far considered the problems of identity testing and closeness testing in the presence of the noise that is directly accessible and proved these problems have the same sample complexity as their respective noise-free versions.  These results raise the question of whether one can replace the requirement of access to the noise by an assumption that restricts the noise to be in a {\em class} of distributions and still achieve improved sample complexity compared to the near-linear lower bound we mentioned earlier.  In this section we develop a tester for identity testing when the noise distribution belongs to the class of {\em $k$-flat distributions} without any further information. This assumption means that the noise can be {\em any} $k$-flat distribution, while the parameters of the $k$-flat distribution are not known to the tester, nor given
via samples. 

\subsection{Preliminaries}
We begin by formally defining $k$-flat distributions:
We say $\II = \{I_1, \ldots, I_k\}$ is a {\em $k$-segmentation} of $[n]$ if and only if  $I_1, \ldots, I_k$ are $k$ disjoint intervals that cover $[n]$. Also, we say a function $f:[n] \rightarrow \mathbb R $ is a {\em $k$-flat function} if and only if there is a $k$-segmentation of $[n]$, namely $\II = \{I_1, \ldots, I_k\}$, such that 
for any two elements, $x$ and $y$, in the same interval in $\II$, $f(x)$ is equal to $f(y)$. A distribution is a {\em $k$-flat distribution} if and only if its probability mass function is a $k$-flat function. 

We next define concepts that will be necessary for describing our algorithms.
For any distribution $p$ and a partition $\DD = \{ D_1, \ldots D_t \}$ of its domain, the \emph{coarsening} of $p$ over $\DD$, denoted by $p_{\tuple{D}}$, is a distribution over the sets in $\DD$ where the probability  of each set $D_i$ is $\sum_{x \in D_i} p(x)$.  For a subset $D \subseteq [n]$, we define the \emph{restriction} of $p$ to $D$, denoted by $p_{|D}$, to be a distribution over $D$ for which the probability of $x \in D$ is equal to $p(x~\mid~x \in D)$. Although the restriction is well-defined only when $p(D)$ is not zero, abusing notation, we define $\| p_{|D} - q_{|D} \|_1$ to be zero if $p(D)$ or $q(D)$ is zero.

Also, throughout this section, we study different schemes for partitioning the domain. In addition to $k$-segmentation, which is
defined earlier, two other schemes are defined as follows: Given a known distribution $q$, Batu et al. in~\cite{BatuFFKRW} 
provide a partitioning scheme, called {\em bucketing}, which places elements with similar probability 
in the same bucket. Note that, in contrast with $k$-segmentation, 
this scheme does not necessarily place consecutive elements in the same bucket. 
\begin{defn} [Similar to \cite{BatuFFKRW}] \label{def:bucketing}
Assume we have a known distribution $q$ over $[n]$. Given a parameter $\epsilon$, we define the {\em bucketing of the domain}, \textsc{Bucket}$(q, n, \epsilon)$, to be a set of $\vv$ subsets of the domain, $\BB = \{B_1, \ldots, B_\vv\}$, where each subset is defined as below:
$$B_1 \defeq \left\{x \in [n]\left| q(x) \leq \frac {\epsilon^2} n\right.\right\}, \mbox{ and}$$
$$ B_i \defeq \left\{x \in [n] \left | \frac{(1+\epsilon)^{i} \epsilon^2 }{n} < q(x) \leq \frac{(1+\epsilon)^{i+1} \epsilon^2}{n} \right.\right\} \quad \mbox{for } i = 2, \ldots, \vv\,.$$
\end{defn}
We define the last partitioning scheme below. This partition is a refinement of the bucketing with respect to a $k$-segmentation $\II$.

\begin{defn} \label{def:division}
Assume $\II$ is a $k$-segmentation of $[n]$, and $\BB$ is a bucketing of $[n]$ containing $\vv$ disjoint subsets. We define $\DD(\II, \BB) = \{D_{i,j, \ell}\}_{(i,j) \in [k]\times[\vv]}$ to be {\em a division of the domain} for which the $D_{i,j, \ell}$'s are the intersection of the $i$th interval and the $j$th bucket. Formally, $D_{i,j, \ell}$ is defined as:
$$D_{i,j, \ell} \coloneqq  \{x \in [n] \,|\,x \in I_i \cap B_j\} \,.$$
\end{defn}

\paragraph{The problem of testing identity in the presence of $k$-flat noise.} Suppose we are given a known distribution $q$, and sample access to a distribution $p$ both over the domain $[n]$. Let $\CC$ denote the class of all $k$-flat distributions over $[n]$.  The problem of testing identity in the presence of $k$-flat noise boils down to distinguishing the following cases with probability at least 2/3:
\begin{itemize}
    \item  There exists a mixture parameter $\alpha^*$ and a 
$k$-flat distribution $r^*$ over $[n]$
such that $p$ is a mixture of $q$ and $r^*$ with parameter $\alpha^*$, i.e., $p = (1-\alpha^*) q + \alpha^* \, r^*$.
    \item $p$ is $\epsilon$-far from any distribution of the form $(1-\alpha) q + \alpha \, r$ where $r \in \CC$ and $\alpha \in [0,1]$.
\end{itemize}

\subsection{The algorithm} \label{sec:kFlatAlg}
We start by explaining the properties of the partitioning schemes we defined earlier. Let $\BB = \textsc{Bucket}(q, n, \epsilon')$ be the bucketing of the domain elements for a parameter $\epsilon' \coloneqq \epsilon/14$. The algorithm can obtain this bucketing since $q$ and $\epsilon$ is known to the algorithm.
The bucketing scheme is designed such that the probabilities of the elements in a bucket are within a $(1+\epsilon)$-factor of each other (except for $B_1$). This property implies that the restriction of $q$ to any bucket is extremely close to the uniform distribution. 

Now, assume that $p$ is in fact a mixture of $q$ and a $k$-flat distribution $r^*$. We denote the $k$-segmentation of $r^*$ by $\II^* = \{I_1^*, \ldots, I_k^*\}$ (which is not known to the algorithm). By definition, the restriction of $r^*$ on any $I^*_i \in \II^*$  is a uniform distribution. Consider the division $\DD(\II^*, \BB)$, described in
Definition \ref{def:division}. Observe that $D_{i,j, \ell} \in \DD$ is a subset of both $I_i$ and $B_j$. One can show that the restriction of $r^*$ is uniform on $D_{i,j, \ell}$, and the restriction of $q$ to $D_{i,j, \ell}$ is very close to the uniform distribution as well. Thus, $p$, which is assumed to be the mixture of $q$ and $r^*$, must be very close to the uniform distribution on $D_{i,j, \ell}$. We formally prove this claim in Lemma~\ref{lem:smooth_Mix}.

Based on the above observation, our tester looks for two qualities in $p$ to assert that it is a mixture distribution: Given a division $\DD(\II,\BB)$, (i) are the restrictions of $p$ to the $D_{i,j, \ell}$'s almost uniform and (ii) is the overall shape of $p$ over $D_{i,j, \ell}$'s (i.e.,  $p_{\tuple{\DD}}$) consistent with
a mixture of $q$ and a $k$-flat noise distribution?
More specifically, our tester follows these steps.
For every $k$-segmentation $\II$, the tester 
checks  that the restriction of $p$ to 
each $D_{i,j, \ell} \in \DD(\II, \BB)$ is almost uniform. If it figures out that it is not the case, it abandons the current segmentation, and start over with another one. If at some point, the tester passes this step, it checks the overall shape of $p$. It draws enough samples from $p$ and forms the empirical distribution $\hat p$ from the samples. Then it checks whether there exists a $k$-flat {\em function}, 
$f$, such that $\hat p_{\tuple{\DD}}$  is consistent with 
a mixture of $q$ and $f$. 
If the tester finds a $k$-segmentation such that the distribution passes the two steps above, then it  asserts  that $p$ is a mixture and outputs \accept. Otherwise, it outputs \reject. 

Based on our first observation, one can expect the tester to accept a mixture distribution $p$. However, the main challenge is to show that the tester rejects when $p$ is $\epsilon$-far from being a mixture. To prove this fact, we also use the following observation. Suppose we have two distributions $p$ and $p'$. Let $\mathcal P$ be a partition of their domain. We prove that if $p$ and $p'$ are $\epsilon$-far from each other, there is a noticeable discrepancy between either their coarsening distributions over $\mathcal P$ or their restrictions to the subsets in $\mathcal P$ (Lemma~\ref{lem:distL1ToCoarsening}). This observation implies that if $p$ is $\epsilon$-far from being a mixture distribution, then at least one the steps will fail. Hence, we distinguish both cases with high probability.   

We describe our tester in Algorithm~\ref{alg:kFlat-known-I} and show its correctness in Theorem~\ref{thm:kflat-known_I}. 
Later, we also discuss how to avoid trying all $\II$'s and achieve a polynomial time algorithm.  All missing proofs in the rest of this section are in Section~\ref{sec:kFlatProofs}.

\begin{algorithm}[ht]
\caption{Identity testing in the presence of $k$-flat noise}
\label{alg:kFlat-known-I}
\begin{algorithmic}[1]
\Procedure{Identity-Tester-k-Flat-Noise}{$q, n, \epsilon$, sample access to $p$}
    \State{$\epsilon' \gets \epsilon/14$}
    \State{$\BB \gets $ \textsc{$(q, n,, \epsilon')$}}
    \State{$M \gets $ Multiset of $s = \widetilde{\Theta}_\epsilon(k \sqrt{n})$ i.i.d. samples from $p$.}
    \For {every possible $k$-segmentation $\II$}
        \State {$\DD \gets \DD(\II, \BB)$}
        \For{$i = 1$ to $k$}
            \For{$j = 2$ to $\vv$}
                \State {$M_{i,j} \gets M \cap D_{i,j, \ell}$}
                \If{$|M_{i,j}| \geq \epsilon' s /4(k\cdot\vv)$}
                    \If { $\|p_{|D_{i,j, \ell}} - \UU_{|D_{i,j, \ell}} \|_2 \geq 2 \, {\epsilon'}/\sqrt{|D_{i,j, \ell}|}$}
                        \State{Continue with another segmentation.   \label{line:kFlatRjctInD}}
                    \EndIf
                \EndIf
            \EndFor
        \EndFor
        \State {$\hat p \gets$ Empirical distribution built by samples in $M$}
        \For{$\alpha = 0, \epsilon'/2, \epsilon', \ldots, 1$}
            \State{$f \gets$ find a $k$-flat function on $\II$ such that $\hat p_{\tuple{\DD}}$ is $2\,\epsilon'$-close to $(1-\alpha)q_{\tuple{\DD}} + \alpha \,f_{\tuple{\DD}}$}
            \If{$f$ exists }
                \State{\Return {\accept} \label{line:accKflatFunc}}
            \EndIf
        \EndFor
    \EndFor
    \State{\Return {\reject}}
\EndProcedure
\end{algorithmic}
\end{algorithm}

\begin{theorem} \label{thm:kflat-known_I} Algorithm~\ref{alg:kFlat-known-I} is an identity tester in the presence of  $k$-flat noise that uses $\widetilde{O}(\sqrt{n k}/\epsilon^{3.5})$ samples.
\end{theorem}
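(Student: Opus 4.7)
My plan is to establish completeness and soundness of Algorithm~\ref{alg:kFlat-known-I} separately, then account for the sample complexity. The overall strategy directly mirrors the two observations introduced before the algorithm: first, for the correct guess $\II^*$, a mixture $p=(1-\alpha^*)q+\alpha^*r^*$ is nearly uniform on every piece of the refinement $\DD(\II^*,\BB)$; and second (Lemma~\ref{lem:distL1ToCoarsening}), any $\ell_1$-gap between two distributions on the same partition must manifest either within some piece or across the coarsening.

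For completeness, assume $p=(1-\alpha^*)q+\alpha^*r^*$ with $r^*$ a $k$-flat distribution on segmentation $\II^*$. I would analyze the iteration $\II=\II^*$. On each piece $D_{i,j,\ell}\in\DD(\II^*,\BB)$ the restriction $r^*_{|D_{i,j,\ell}}$ is uniform (since $D_{i,j,\ell}\subseteq I^*_i$), and the bucketing forces $q_{|D_{i,j,\ell}}$ to be $(1+\epsilon')$-multiplicatively close to uniform. By Lemma~\ref{lem:smooth_Mix}, $p_{|D_{i,j,\ell}}$ is therefore $O(\epsilon')$-close to uniform in $\ell_2$-norm, so each $\ell_2$-uniformity test accepts with high probability whenever the heaviness check triggers. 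Meanwhile $p_{\tuple{\DD}}=(1-\alpha^*)q_{\tuple{\DD}}+\alpha^*r^*_{\tuple{\DD}}$ exactly, and $r^*$ induces a $k$-flat function $f$ on $\II^*$; the grid point $\alpha$ within $\epsilon'/2$ of $\alpha^*$ together with empirical concentration of $\hat p_{\tuple{\DD}}$ (needing only $\widetilde{O}(k\vv/\epsilon^2)$ samples on the size-$k\vv$ coarsened domain) produces a valid $f$, and the algorithm outputs \accept.

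For soundness I would argue the contrapositive: if the tester accepts for some $\II$ with parameters $\alpha,f$, then $p$ is $\epsilon$-close to a mixture $(1-\alpha)q+\alpha r$, where $r$ is a $k$-flat distribution extracted from $f$ by spreading the per-interval mass uniformly within each $I_i\in\II$ (nonnegativity and normalization can be enforced with only $O(\epsilon')$ correction, since $f_{\tuple{\DD}}$ is close to $(\hat p_{\tuple{\DD}}-(1-\alpha)q_{\tuple{\DD}})/\alpha$). Applying Lemma~\ref{lem:distL1ToCoarsening} to the partition $\DD(\II,\BB)$ bounds $\|p-((1-\alpha)q+\alpha r)\|_1$ by the coarsening discrepancy plus the mass-weighted sum of per-piece restriction discrepancies. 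The former is $O(\epsilon')$ by the grid-search acceptance condition and $\hat p$-concentration. For the latter, on each piece $D_{i,j,\ell}$ that passed the $\ell_2$-test the restrictions of $p$, $q$, and $r$ are all $O(\epsilon')$-close to uniform (converting $\ell_2$ to $\ell_1$ via $\sqrt{|D_{i,j,\ell}|}$), contributing $O(\epsilon')\cdot p(D_{i,j,\ell})$ each. The contribution of the skipped pieces (including all of $B_1$) is controlled by the heaviness threshold and the bound $q(B_1)\le\epsilon'$: their total $p$-mass is $O(\epsilon')$, so their aggregate contribution is $O(\epsilon')$. Summing yields $\|p-((1-\alpha)q+\alpha r)\|_1\le 14\epsilon'=\epsilon$, contradicting $p$ being $\epsilon$-far from all such mixtures.

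The main obstacle, and where the bookkeeping becomes delicate, is the sample complexity coupled with a union bound over all $k$-segmentations. A single $\ell_2$-uniformity test on a piece of size $|D|$ with $p(D)\gtrsim\epsilon'/(k\vv)$ demands $s\cdot p(D)\gtrsim\sqrt{|D|}/\epsilon'^2$ samples, which under $|D|\le n$ and $\vv=O(\log n/\epsilon)$ gives $s=\widetilde{O}(\sqrt{nk}/\epsilon^{3.5})$; the improvement from $k\sqrt{n}$ down to $\sqrt{nk}$ comes from distributing the cost across the $k\vv$ simultaneously tested pieces, whose sizes sum to $n$ (so the effective $\sqrt{|D|}$ per test is typically $\sqrt{n/k}$, not $\sqrt{n}$). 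There are at most $\binom{n-1}{k-1}\le n^k$ segmentations to union-bound over; amplifying each test's failure probability to $n^{-\Omega(k)}$ costs an extra $O(k\log n)$ factor absorbed into $\widetilde{O}(\cdot)$. Reusing the same sample set $M$ across all $\II$ is legitimate because each test's output is a deterministic function of $M$, so the union bound still applies. The subtlest point I expect is converting the $\ell_2$ guarantees from the per-piece tests into the $\ell_1$ guarantees needed in Lemma~\ref{lem:distL1ToCoarsening}, where the $\sqrt{|D|}$ loss is exactly counterbalanced by the $O(\epsilon'/\sqrt{|D|})$ strength of the $\ell_2$ test.
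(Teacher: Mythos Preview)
Your high-level architecture (completeness via Lemma~\ref{lem:smooth_Mix}, soundness via the contrapositive using Lemma~\ref{lem:distL1ToCoarsening}, and the $\sqrt{|D|}$ conversion between $\ell_2$ and $\ell_1$ on each piece) is essentially the paper's argument. The correctness portions would go through with only cosmetic changes.

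However, your sample-complexity analysis has two genuine gaps that prevent you from reaching the claimed $\widetilde{O}(\sqrt{nk}/\epsilon^{3.5})$ bound.

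\textbf{First, the $\sqrt{nk}$ versus $k\sqrt{n}$ issue.} Your justification that ``the effective $\sqrt{|D|}$ per test is typically $\sqrt{n/k}$ because the sizes sum to $n$'' is not an argument: nothing prevents a single $D_{i,j}$ from having size $\Theta(n)$ while carrying mass only $\Theta(\epsilon'/(k\vv))$, and for \emph{that} piece your inequality $s\cdot p(D)\gtrsim\sqrt{|D|}/\epsilon'^2$ forces $s\gtrsim k\vv\sqrt{n}/\epsilon'^3$, i.e.\ $\widetilde{O}(k\sqrt{n})$ rather than $\widetilde{O}(\sqrt{nk})$. The paper fixes this by an explicit \emph{refinement} step: any $D_{i,j}$ of size exceeding $\lceil n/t\rceil$ (with $t=k\vv$) is split into $\lfloor |D_{i,j}|\,t/n\rfloor+1$ pieces $D_{i,j,\ell}$ of size at most $\lceil n/t\rceil$. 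This keeps the number of pieces $O(t)$, preserves all the near-uniformity properties, and now $\sqrt{|D_{i,j,\ell}|}\le\sqrt{n/t}$, yielding $s\gtrsim t\sqrt{n/t}/\epsilon'^3=\sqrt{nt}/\epsilon'^3=\widetilde{O}(\sqrt{nk}/\epsilon^{3.5})$. Without this refinement you cannot get the stated bound.

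\textbf{Second, the union bound.} Amplifying each test to failure probability $n^{-\Omega(k)}$ costs a multiplicative $\Theta(k\log n)$, and this factor is \emph{not} absorbed by $\widetilde{O}(\cdot)$ when $k$ is polynomial in $n$; you would end up with $\widetilde{O}(k^{3/2}\sqrt{n})$. The paper avoids this by observing that, although there are $\binom{n-1}{k-1}$ segmentations, the collection of \emph{distinct} sets $D_{i,j,\ell}$ that can ever arise is only of size $O(n^2\vv)$: each such set is determined by a contiguous interval (two endpoints in $[n]$), a bucket index $j\in[\vv]$, and the refinement index $\ell$. Thus the union bound for the uniformity and heaviness tests is over $\mathrm{poly}(n)$ events, costing only an $O(\log n)$ amplification. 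For the coarsening step, the paper uses a separate VC-type counting argument (Lemma~\ref{lem:MultiEmpirical}) to show that $O(t\log n/\epsilon'^2)$ samples suffice to make $\hat p_{\tuple{\DD}}$ accurate simultaneously for \emph{all} divisions $\DD$.
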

\begin{proof}\;We set $\epsilon' = \epsilon/14$. We denote the number of buckets in $\BB = \textsc{Buckets}(q, n, \epsilon')$ by $\vv$. Let $t$ denote $k \cdot \vv$.
Without loss of generality we assume $t \leq n$. Otherwise, one could learn the distribution $p$ up to $\epsilon/2$ $\ell_1$-distance error via $O(n/\epsilon^2)$ samples, and trivially check if it is $\epsilon/2$-close to a mixture of $q$ and a $k$-flat distribution.

Consider a segmentation $\II$, and a division $\DD \coloneqq  \DD(\II, \BB)$. 
To obtain better sample complexity, we need to make sure that the size of each set in $\DD$ is not greater than $\ceil{n/t}$. In the case that a large set of size $z > \ceil{n/t}$ exists, we split it into $D_{i,j, \ell} \coloneqq \floor{z\cdot t/ n} + 1$ sets of roughly the same size and denote them by $D_{i,j, \ell}$ for $\ell \in [D_{i,j, \ell}]$. The new sets form a new partition of the domain. We call it a {\em refined division}, denoted $\widetilde{\DD} \coloneqq \widetilde{\DD}(\II, \BB, n, t)$. Note that this replacement will not asymptotically increase the total number of sets in the division, since $\widetilde{\DD}$ has $\sum_{i,j} D_{i,j, \ell} \leq 2\,t$ many sets. 

Now, we establish that for a sufficiently large number of samples, the three steps in the algorithm succeed with high probability.  First, in the following lemma, we show that $O(t \cdot \log n/\epsilon^2)$ samples are enough to obtain an empirical distribution $\hat p$ such that for all the divisions $\DD$ $\hat p_{\tuple{\DD}}$ and $p_{\tuple{\DD}}$ are $\epsilon'$-close to each other with probability 0.9. 

\begin{restatable}{lemma}{lemMultiEmpirical}\label{lem:MultiEmpirical}
Assume $p$ is a distribution over $[n]$. Let $\hat p$ be an empirical distribution formed by $\Theta(\min(n, k \vv \log n)\cdot (\log \delta^{-1})/\epsilon'^2)$ samples from $p$. 
Fix a bucketing of the domain $\BB = $\textsc{Bucket}$(q, n, \epsilon')$. 
For every $k$-segmentation $\II$, and the corresponding refined division of the domain $\DD = \DD(\II, \BB)$, the coarsening of $p$ and the empirical distribution $\hat p$ over $\DD$ is at most $\epsilon'$-far from each other with probability at least $1-\delta$.
\end{restatable}

Second, we show that if $p(D_{i,j, \ell})$, for a fixed $i,j,$ and $\ell$, is at least $\epsilon'/|\widetilde{\DD}| = \Theta(\epsilon'/t)$, 
then $M_{i,j}$ contains at least $\epsilon'/(4t)$ fraction of the samples with high probability.
Note that there are at most $\Theta(n^2 \cdot \vv)$ set $D_{i,j, \ell}$ for a fixed $\BB$. Using the Chernoff bound, the claim is true for all $D_{i,j, \ell}$'s with probability 0.9 if we draw more than $\Theta (\log (n^2 \cdot \vv) t/\epsilon')$ samples. 

Third, we show if $M_{i,j}$ contains enough samples, then with high probability we can distinguish whether  $\|p_{|D_{i,j, \ell}} - \UU_{|D_{i,j, \ell}} \|_2^2$ is at most $\epsilon'^2/{|D_{i,j, \ell}|}$, or it is at least $2\epsilon'^2/{|D_{i,j, \ell}|}$: If we draw $\Theta( t/\epsilon' \cdot  (\log (n^2 \cdot \vv)  \cdot \sqrt{n/t}/\epsilon^2))$ samples, we receive $O((\log (n^2 \cdot \vv)  \cdot\sqrt{n/t}/\epsilon^2) = O((\log (n^2 \cdot \vv)  \cdot\sqrt{{|D_{i,j, \ell}|}}/\epsilon^2)$  samples from any set $D_{i,j, \ell}$ with $p(D_{i,j, \ell}) \geq \epsilon'/t$. Based on~\cite[Theorem 1]{DiakonikolasGPP16}, with probability $1 - 1/3$, we can distinguish whether $\|p_{|D_{i,j, \ell}} - U_{|D_{i,j, \ell}|}\|_2^2 $ is at most $2\epsilon'^2/|D_{i,j, \ell}|$ or at least $\epsilon^2/|D_{i,j, \ell}|$ using $\Theta(\sqrt {|D_{i,j, \ell }|} /\epsilon^2)$ samples. By repeating this $\Theta(\log (n^2 \cdot \vv))$ times and taking the majority answer, we can be assured to obtain the correct answer for the test on all the $D_{i,j, \ell}$'s with probability at least 0.9. Thus, we need $O(\sqrt{n \cdot t}\cdot (\log n + \log \vv)/\epsilon^3)$ samples for this step.

In the above three steps, we need the following number of samples:
\begin{align*}  
    O(t \cdot \log n/\epsilon^2 & + \sqrt{n \cdot t}\cdot (\log n + \log \vv)/\epsilon^3 + \log (n^2 \cdot \vv) t/\epsilon')
    \\ & = O(\sqrt{n\cdot k}/\epsilon^{3.5}) \cdot \text{Polylog}(n, \epsilon^{-1})
    =  \widetilde{O}(\sqrt{n\cdot k}/\epsilon^{3.5})
\,.
\end{align*}
By a union bound, the probability than any of the above steps goes wrong is at most 0.3. Hence, for the rest of the proof, we assume that the algorithm carries out the steps as expected  with probability at least 2/3. Given this assumption, we show in both the completeness case and the soundness case, the algorithm outputs the correct answer.

\noindent
\textit{Completeness:} In this case, there exist a $k$-flat distribution over $\II$, $r$, and a parameter $\alpha^*$ such that $p = (1-\alpha^*) q + \alpha^* r$. 
First, note that $p$ in each $D_{i,j, \ell } \in \widetilde{\DD}$ is close to the uniform distribution. In particular, we have the following lemma.

\begin{restatable}{lemma}{lemSmoothMix} \label{lem:smooth_Mix}
Suppose $p$ is a mixture of $q$ and $r$ with parameter $\alpha$. Let $\II^*$, $\BB$, and $\widetilde{\DD}$, be the partitions we defined earlier. For any non-empty set, $D_{i,j, \ell} \in \DD$, if $j > 1$, then the restriction of $p$ to the set, $p_{|D_{i,j, \ell}}$, is $\epsilon$-close to the uniform distribution in $\ell_1$-distance and $\epsilon/\sqrt{|D_{i,j, \ell}|}$-close to the uniform distribution in $\ell_2$-distance.
\end{restatable}

The lemma implies that for all the $D_{i,j, \ell}$, $p_{|D_{i,j, \ell}}$ is close to the uniform distribution. Hence, the algorithm while considering the segmentation $\II^*$, will not continue with another segmentation since $p_{|D_{i,j, \ell}}$ is being far from uniform, and the algorithm will move on to the next step.

Also,  we show that a $k$-flat function, $f$, exists, because $r$ is a solution itself. We have $\Omega(t /\epsilon'^2)$ samples which is enough to learn the coarsening of $p$ over $\widetilde{\DD}$. Thus, the coarsening of the empirical distribution, $\hat p$, is $\epsilon'$-close to the coarsening of $p$ over $\widetilde{\DD}$. There exists an iteration in the algorithm in which we try a parameter $\alpha$ such that $\alpha - \alpha^*$ is at most $\epsilon'/2$. Therefore, $r$ itself is a solution the algorithm is looking for:
\begin{align*}
    \|\hat p_{\tuple{\widetilde{\DD}}} - & ((1-\alpha)q_{\tuple{\widetilde{\DD}}} + \alpha \,r_{\tuple{\widetilde{\DD}}})\| \leq  \|\hat p_{\tuple{\widetilde{\DD}}} - p_{\tuple{\widetilde{\DD}}} \|_1 
    \\& + \|p_{\tuple{\widetilde{\DD}}} - ((1-\alpha^*)q_{\tuple{\widetilde{\DD}}} + \alpha^*\, r_{\tuple{\widetilde{\DD}}})\| 
    \\ & + \|((1-\alpha^*)q_{\tuple{\widetilde{\DD}}} + \alpha^*\, r_{\tuple{\widetilde{\DD}}})- ((1-\alpha)q_{\tuple{\widetilde{\DD}}} + \alpha\, r_{\tuple{\widetilde{\DD}}})\| 
    \\ & \leq \epsilon' + 0 + \frac{\epsilon'}{2} \cdot \|q_{\tuple{\widetilde{\DD}}} - r_{\tuple{\widetilde{\DD}}}\|_1 \leq 2 \epsilon'.
\end{align*}
Hence, the algorithm will not output \reject. 

\noindent
\textit{Soundness:} In this case, $p$ is $\epsilon$-far from any mixture distribution $q_\alpha = ((1-\alpha)q_{\tuple{\widetilde{\DD}}} + \alpha \,r_{\tuple{\widetilde{\DD}}})$ for any $k$-flat distribution $r$ and $\alpha \in [0,1]$. We have the following structural lemma (similar to Lemma 6 in \cite{BatuFFKRW}) which bounds the distance between $p$ and $q_\alpha$ from above: 
\begin{restatable}{lemma}{lemDistLOneToCoarsening}\label{lem:distL1ToCoarsening}
Assume $p$ and $q$ are two distributions on $[n]$, and let $\widetilde{\DD}$ be a refined division of the domain elements. Then, we have 
$$
\left\|p - q \right\|_1  \leq \left\| p_{\tuple{\widetilde{\DD}}} - q_{\tuple{\widetilde{\DD}}} \right\|_1 + \sum\limits_{D \in \widetilde{\DD}} \left\| p_{|D} - q_{|D}\right\|_1 \cdot \min( p(D), q(D)).$$
\end{restatable}

Since the distance between $p$ and $q_\alpha$ is at least $\epsilon$, we can apply this lemma to obtain a lower bound for the two quantities in the right hand side of the equation above. 
\begin{equation}\label{eq:twoTerm}
\begin{split}
    \epsilon & = 14 \epsilon' < \|p - q_\alpha\|_1 
    \\ & \leq  \left\| p_{\tuple{\widetilde{\DD}}} - (q_\alpha)_{\tuple{\widetilde{\DD}}} \right\|_1 + \sum\limits_{D_{i,j, \ell}} \left\| p_{|D_{i,j, \ell}} - (q_\alpha)_{|D_{i,j, \ell}} \right\|_1  \cdot \min\left(p(D_{i,j, \ell}), q_\alpha(D_{i,j, \ell})\right).
\end{split}
\end{equation}

At least one of the two terms on the right hand side above is greater than  $7\epsilon'$. Net, we show if the algorithm reaches to the point that forms the empirical distribution, then the second term is at most $7\epsilon'$. On the other hand, if the algorithm outputs \accept, then the first term is at most $5\epsilon'$. Hence, these two events cannot happen at the same time while $|p - q|\geq \epsilon$.

Formally, if there is no $D_{i,j, \ell}$ such that causes the algorithm to move forward to the next segmentation, then for each $D_{i,j, \ell}$ either the weight of the set is not larger than $\epsilon'/|\widetilde{\DD}|$, or the $\ell_2$-distance between $P_{|D_{i,j, \ell}}$ and the uniform distribution is not more than $\sqrt {2 \epsilon'^2/|D_{i,j, \ell}|}$. In the following lemma, we show that this situation implies that the second term in Equation~\ref{eq:twoTerm} is at most $6.42\,\epsilon'$.
\begin{restatable}{lemma}{lemUniftoQa}\label{lem:UniftoQa}
Suppose for every non-empty $D_{i,j, \ell}$ in the division $\widetilde{\DD} = \widetilde{\DD}(\II, \BB)$, either $p(D_{i,j, \ell})$ is at most $\epsilon'/|\widetilde{\DD}|$, or  $\|p_{|D_{i,j, \ell}} - \UU_{|D_{i,j, \ell}}\|_2^2$ is at most $2 \epsilon'^2 / |D_{i,j, \ell}|$. Let $q_\alpha$ be a mixture of $q$ and a $k$-flat distribution over $\II$ with an arbitrary $\alpha$ in $[0,1]$. Then, the following holds
$$
\sum_{i=1}^k \sum_{j=1}^\vv \|p_{|D_{i,j, \ell}} - (q_\alpha)_{|D_{i,j, \ell}}\|_1 \cdot \min \left(p(D_{i,j, \ell}), q_\alpha(D_{i,j, \ell}) \right)\leq 6.42 \, \epsilon'\,.
$$
\end{restatable}

On the other hand, if the algorithm outputs \accept, it implies that there exists a function $f$ and $\alpha$, such that $\|\hat p - ((1-\alpha) \, q + \alpha \, f)\|_1$ is at most $2 \epsilon'$. In the following lemma, we show it implies that there exists a $q_\alpha$ such that  $\|p_{\tuple{\DD}} - (q_\alpha)_{\tuple{\DD}}\|_1$ is at most  $5 \epsilon$. 

\begin{restatable}{lemma}{lemkFlatFuncTodist}\label{lem:kFlatFuncTodist}
Assume $p$, $\hat p$, and $q$ are three distributions over $[n]$, and $f:[n] \rightarrow R^+$ is a $k$-flat function over $k$-segmentation $\II$. For a division $\DD$,
suppose $\hat p_{\tuple{\DD}}$ is $\epsilon'$-close to $p_{\tuple{\DD}}$, and there exists $\alpha \in [0,1]$  such that $\|\hat p - ((1-\alpha) \, q + \alpha \, f)\|_1$ is at most $2 \epsilon'$. Then, there exists a $k$-flat distribution $r$, such that $p$ is $5 \epsilon'$-close to the mixture of $r$ and $q$ with parameter $\alpha$.
\end{restatable}

Moreover, outputting \accept means that the two terms in  Equation~\ref{eq:twoTerm} are at most $6.42\epsilon' + 5 \epsilon' < 14 \epsilon'$, which contradicts the fact that one of them has to be $7\epsilon'$. Hence, the proof is complete. 
\end{proof}

\paragraph{A faster algorithm.} In the interest
of a simpler exposition, the algorithm described above
tries all possible $k$-segmentations.  However,
there are at most $O(n^2 \cdot \vv)$ possible subsets that 
could appear as $D_{i,j, \ell}$'s. Hence, one can test 
uniformity of $p$ on each of them separately regardless of $\II$. 
Moreover, finding a $k$-flat function $f$ for which the  $\ell_1$-distance between $\hat p$ and the mixture of $q$ and $f$ is minimized,
can be done via dynamic programming:
we define $d[i,j]$ to be the smallest $\ell_1$-distance between $\hat p$ and mixture of $q$ and any $j$-flat distribution when we consider only the first $i$ elements of the domain. We compute $d[i,j]$  
using the previously computed $d[i',j-1]$:
$$
d[i,j] = \min\limits_{1 \leq i' < i} d[i', j-1] + \mbox{cost}([i',i]),
$$
where the cost$([i',i])$ is defined as follows:
 We set the cost of an interval to infinity if any 
subset of $[i',i]$ which would have appeared in the divisions (i.e, all subsets in such form $[i',i] \cap B_z)$ for $z = 1, \ldots, \vv$) 
does not pass the uniformity test. Otherwise, 
cost$([i,i'])$ is the minimum $\ell_1$-distance between $\hat p$ and a mixture of $q$ and a constant function for the elements in $[i',i]$. 
Since 
we are only looking for 
$k$-flat functions rather than  distributions, 
the updates can be computed locally and independently 
of the rest of segments. 

\subsection{Proofs for Section~\ref{sec:kFlatAlg}}\label{sec:kFlatProofs}

In this section, we provide the proof for the lemmas stated earlier in Section~\ref{sec:kFlatAlg}.

\lemMultiEmpirical* 

\begin{proof}
\label{prf:MultiEmpirical} Suppose we draw $s$ samples from $p$. Let $n_x$ indicate the number of occurrences of $x$ among the samples. Let $\hat p$ be the empirical distribution formed by $s$ samples which means that $\hat p(x) \coloneqq n_x/s$.
The goal is to show that for every segmentation $\II$, the coarsening of $\hat p$ and $p$ over $\widetilde{\DD}(\II,\BB, n, t)$ are $\epsilon'$-close with probability at least $1-\delta$. 
We build on the standard idea that is used to show that $O(n/\epsilon^2)$ samples is sufficient to learn a distribution over $[n]$ within $\epsilon$ error in $\ell_1$-distance.
Consider $\widetilde {\DD}$ which contains  $\Theta(t)$ disjoint subsets of $[n]$. The $\ell_1$-distance between the coarsening of $p$ and the empirical distribution is defined as follows:
\begin{align*}
    \|p_{\tuple{\DD}} - \hat p_{\tuple{\DD}}\|_1 & = \sum_{i=1}^k \sum_{j=1}^\vv \sum_{\ell=1}^{D_{i,j, \ell}} \left|\sum_{x \in D_{i,j, \ell}} p(x) - \sum_{x \in D_{i,j, \ell}} \hat p(x)\right|
    \\& = \sum_{x \in [n]} \mbox{sign}\left( \sum_{x \in D_{i,j, \ell}} p(x) - \sum_{x \in D_{i,j, \ell}} \hat p(x) \right) \cdot (p(x) - \hat p (x)).
\end{align*}
We need to show that the above quantity is at most $\epsilon'$ for any segmentation $\II$ and its corresponding division $\DD(\II,\BB)$. However, we prove a stronger claim:
Suppose we have a collection $C$ of vectors of length $n$ with entries in $\{+1, -1\}$  for which the following is true:
\begin{itemize}
    \item For every refined division $\widetilde\DD$, an every set $D_{i,j, \ell} \in \widetilde{\DD}$, there exists a vector $c \in C$ such that if $x$ is in $ D_{i,j,\ell}$, then $c_x = \mbox{sign}\left( \sum_{x \in D_{i,j, \ell}} p(x) - \sum_{x \in D_{i,j, \ell}} \hat p(x) \right)$.
    
    \item For all $c \in C$, $\sum_{x\in[n]} c_x \cdot (p(x) - \hat p (x))$ is at most $\epsilon'$ with probability at least $1-\delta$.
\end{itemize}

The proof is complete if we establish this claim, so now we focus on proving that the collection $C$ exists. We first put a vector $c$ corresponding to each refined division. Then we show there is an upper bound for the size of the collection. Next, we show since there are not too many vectors in the collection, with high probability, $\sum_{x\in[n]} c_x \cdot (p(x) - \hat p (x))$ is at most $\epsilon'$ for any $c \in C$.

Clearly, there are no more than $2^n$ possible vectors. However, we get a better bound for the cases when $k$ is not arbitrarily large. We begin by
considering a refined division $\widetilde{\DD}$. Fix a set $B \in \BB$. If two elements in $B$ are in the same interval $I_i$ for $i \in [k]$, 
then they will have the same $c_x$ as well. 
Thus, if we sort elements in $B$, and then write the corresponding $c_x$'s,
then we get a sequence of $+1$ and $-1$ where the sign is changed 
in at most $\Theta(t)$ places. 
To uniquely represent the sequence, 
one can determine the indices where the sign 
changed and indicate whether the sequence starts with $+1$ or $-1$. Thus, the total number of such sequences is: 
$$
\sum_{i=0}^{\Theta(t)} 2 \cdot 
\binom{|B|-1}{i} \leq 2 \cdot ((|B|)^{\Theta(t)} + 1) \leq n^{\Theta(t)} \,.
$$
Note that we have at most $\vv$ such subsets of the domain $B \in \BB$. Thus, the total number of vectors in $C$ is at most $\min(2^n, n^{\Theta(t\cdot \vv)})$. 

Next, we show that if we draw enough samples, the probability of  $\sum_{x\in[n]} c_x \cdot (p(x) - \hat p (x)) \geq \epsilon'$ for any $c \in C$ is at most $\delta$.
Fix a vector $c = (c_1, \ldots, c_n)$ in $\{+1, -1\}^n$. Consider the following random process: we draw a sample from $p$, namely $x$; if $c_x$ is one,  output one and otherwise output zero. In other words upon receiving sample $x$, we output $(1 + c_x)/2$. Assume $x_1, \ldots, x_s$ are $s$ samples from $p$ that form the empirical distribution. Suppose that we generate $ b_1, \ldots, b_s$ according to the process using these samples from $p$, i.e., $b_j = (1 + c_{x_j})/2$.  The $b_j$'s are $s$ independent random variables with the following expected value. 
\begin{align*}
    \E{b_j} = \sum_{x = 1}^n p(x) \cdot \frac{1 + c_x}{2} = \frac{1 + \sum_{x} c_x \cdot p(x)}{2}.
\end{align*}
Clearly, the average of $b_j$'s are close to its expectation with high probability, we use this fact to show that $\sum_{x} c_x \cdot (p(x) - \hat p(x))$ are close to zero as well. Recall $n_x$ is the number of occurrences of element $x$ in the sample set. Using the Hoeffding bounds, we achieve:
\begin{equation} \label{eq:prOfDeviation}
\begin{split}
    \Pr{\left|\sum_x c_x  \cdot (p(x) - \hat p(x))\right| \geq \epsilon'}  &= \Pr{\left|\sum_x c_x \cdot p(x) - \sum_x c_x \cdot \frac{n_x}{s}\right| \geq \epsilon'}
    \\ & = \Pr{\left|\sum_x c_x \cdot p(x) - \frac{\sum_{j=1}^s c_{x_j} }{s}\right| \geq \epsilon'}
    \\ & = \Pr{\left|\frac{ 1+ \sum_x c_x \cdot p(x)}{2} - \frac{ s + \sum_{j=1}^s c_{x_j} }{2 \, s}\right| \geq \frac{\epsilon'} 2}
    \\ & =
    \Pr{\left| \frac{1 + \sum_{x} c_x \cdot p(x)}{2} - \frac{\sum_{j=1}^s b_j}{s}\right| \geq \frac {\epsilon'} 2}
    \\ & = \Pr{\left|\E{b_i} - \frac{\sum_{j=1}^s b_j}{s} \right| \geq \frac {\epsilon'} 2} \leq 2 \, \exp\left(-\,s\,{\epsilon'}^2/2\right).
\end{split}
\end{equation}

Therefore, by setting $s = \Theta(\min(n, t \cdot \vv \cdot \log n)\cdot (\log \delta^{-1})/\epsilon'^2)$ and using Equation \ref{eq:prOfDeviation} and a union bound, for every $c \in C$,   $\sum_{x\in[n]} c_x \cdot (p(x) - \hat p (x)) \geq \epsilon'$ is at most $\epsilon'$ with probability $1-\delta$.  This completes the proof.
\end{proof}

\lemSmoothMix*
\begin{proof}
Fix 
a non-empty set $D_{i,j, \ell}$ in $\widetilde{\DD}$ for some $j > 1$. To prove the lemma, we show that the ratio of the maximum and the minimum probability according to $p$ in $D_{i,j, \ell}$ is at most $1+ \epsilon'$.
Consider two elements in $D_{i,j, \ell}$ namely $x$ and $y$ (if there is only one element in $D_{i,j, \ell}$ the claim is apparent). Without loss of generality assume $q(x) \leq q(y)$. By definition of $D_{i,j, \ell}$, $x$ and $y$ are in the same interval of $\II$, so $r(x)$ and $r(y)$ are equal. Thus, we have:
\begin{align}
   1 \leq \frac{p(y)}{p(x)} = \frac{(1-\alpha) q(y) + \alpha\, r(y)}{(1-\alpha) q(x) + \alpha\,r(x)} \leq \frac{q(y)}{q(x)} \leq 1 + \epsilon \,,
\end{align}
the second to last inequality is true, because we have $q(y) \geq q(x) > 0$. Also, the last inequality is true since both $x$ and $y$ are in $B_j$.  In the proof of Lemma 8 in \cite{BatuFFKRW}, it is show that if the ratio of the probabilities in a set, in our case $D_{i,j, \ell}$, is bounded by $(1+\epsilon)$, then for all $x \in D_{i,j, \ell}$, $\left|p(x) - (1/|D_{i,j, \ell}|)\right|$ is at most $\epsilon/|D_{i,j, \ell}|$.  This completes the proof.
\end{proof}

\lemDistLOneToCoarsening*

\begin{proof}
Fix a set in $\widetilde{\DD}$, namely $D$, which $p(D)$ and $q(D)$ are non-zero, we have the following:
\begin{align*}
\left\| p_{|D} - q_{|D}\right\|_1 & = \sum\limits_{x \in D} \left| p_{|D} (x)- q_{|D}(x) \right|  = \sum\limits_{x \in D} \left| \frac {p(x)}{p(D)}- \frac{q(x)}{q(D)} \right| 
\\ & = \sum\limits_{x \in D} \left| \frac {p(x)}{p(D)}- \frac {q(x)}{p(D)}  + \frac {q(x)}{p(D)} - \frac{q(x)}{q(D)} \right| 
\vspace{2mm} 
\\& = \sum\limits_{x \in D} \left| \frac {p(x) - q(x)}{p(D)}-  q(x) \cdot \frac{q(D) - p(D)}{p(D) \,q(D)} \right| 
\\& \geq \frac 1 {p(D)} \sum\limits_{x \in D} \left |p(x) - q(x) \right| - 
\frac {\left| p(D) - q(D) \right|} {p(D)} \cdot  \sum\limits_{x \in D} \frac{q(x)}{q(D)}
\vspace{2mm} \\
& = \frac 1 {p(D)} 
\left( \sum\limits_{x \in D} \left |p(x) - q(x) \right| - 
\left| p(D) - q(D) \right| \right) \,.
\end{align*}

Therefore, we have:
\begin{equation} \label{eq:l_1ToCoarsening_p}
\sum\limits_{x \in D} \left |p(x) - q(x) \right| \leq \left| p(D) - q(D) \right| + \left\| p_{|D} - q_{|D}\right\|_1 \cdot p(D) \, . 
\end{equation}
If we swap $p$ and $q$ in the above inequality, and replicate the equations, we have:
\begin{equation} \label{eq:l_1ToCoarsening_q}
\sum\limits_{x \in D} \left |p(x) - q(x) \right| \leq \left| p(D) - q(D) \right| + \left\| p_{|D} - q_{|D}\right\|_1 \cdot q(D) \, . 
\end{equation}
Putting Equation~\ref{eq:l_1ToCoarsening_p} and Equation~\ref{eq:l_1ToCoarsening_q} together, we get: 
\begin{equation*}
\sum\limits_{x \in D} \left |p(x) - q(x) \right| \leq \left| p(D) - q(D) \right| + \left\| p_{|D} - q_{|D}\right\|_1 \cdot \min(p(D), q(D)) \, . 
\end{equation*}
If at least one of $p(D)$ and $q(D)$ is zero, it implies:
\begin{equation*} 
\sum\limits_{x \in D}  \left |p(x) - q(x) \right| = \left| p(D) - q(D) \right|.
\end{equation*}
Hence, we have:
$$
\begin{array}{ll}
\left\|p - q \right\|_1 &  \leq \sum\limits_{D \in \widetilde{\DD}} \left| p(D) - q(D) \right| + \sum\limits_{D \in \widetilde{\DD}} \left\| p_{|D} - q_{|D}\right\|_1 \cdot \min( p(D), q(D))
\vspace{2mm} \\
& \leq \left\| p_{\tuple{\widetilde{\DD}}} - q_{\tuple{\widetilde{\DD}}} \right\|_1 + \sum\limits_{D \in \widetilde{\DD}} \left\| p_{|D} - q_{|D}\right\|_1 \cdot \min( p(D), q(D))
\, .
\end{array}
$$
\end{proof}

\lemUniftoQa*

\begin{proof}
We first consider a non-empty $D_{i,j, \ell}$ when $j = 1$. Since $j = 1$, $D_{i,j, \ell}$ is a subset of $B_1$. For each $x \in D_{i,j, \ell}$, $q(x)$ is at most $\epsilon'^2/n$. Also, $r$ is a $k$-flat on $\II$, and since $D_{i,j, \ell}$ is a subset of $I_i$, for all $x \in D_{i,j, \ell}$, $r(x)$ is the same. We denote this quantity, $r(x)$, by $b$. Here, we prove that either $q_{\alpha}(D_{i,j, \ell})$ is small, or $q_{\alpha}(D_{i,j, \ell})$ has to be close to uniform. 

We have two cases. First, suppose $\alpha \cdot b$ is at most $\epsilon'/n$. In this case, $q_\alpha(D_{i,j, \ell})$ is at most $\epsilon'^2 |D_{i,j, \ell}|/n$. Thus, the total weight of such sets, sum of $q_\alpha(D_{i,j, \ell})$'s, is at most $\epsilon'^2$. Second, assume $\alpha \cdot b$ is greater that $\epsilon'/n$. On the other hand, $q(x)$ is at most $\epsilon'^2/n$. These two facts implies for each $x$ in $D_{i,j, \ell}$:
\begin{align*}
    |D_{i,j, \ell}| \cdot q_\alpha(D_{i,j, \ell}) \left|(q_\alpha)_{|D_{i,j, \ell}}(x) - \frac 1 {|D_{i,j, \ell}|}\right| & = |D_{i,j, \ell}| \cdot q_\alpha(D_{i,j, \ell}) \cdot \left| \frac{q_\alpha(x)}{q_\alpha(D_{i,j, \ell})} - \frac 1 {|D_{i,j, \ell}|} \right| \\ &  = \left| |D_{i,j, \ell}| \cdot (1-\alpha) q(x) - (1-\alpha) \sum_{x \in D_{i,j, \ell}} q(x)  \right|  \\ & \leq |D_{i,j, \ell}| \frac{\epsilon'^2}{n} \leq \epsilon'\, |D_{i,j, \ell}| \,\alpha \,b   \leq \epsilon' \cdot q_\alpha(D_{i,j, \ell}) \,.
\end{align*}

Therefore, the $\ell_2^2$-distance  between $(q_\alpha)_{|D_{i,j, \ell}}$ and the uniform distribution is bounded:
\begin{align*}
    \|(q_\alpha)_{|D_{i,j, \ell}}(x) - \UU_{|D_{i,j, \ell}}\|_2^2 = \sum_{x \in D_{i,j, \ell}} \left((q_\alpha)_{|D_{i,j, \ell}}(x) - \frac 1 {|D_{i,j, \ell}|}\right)^2 \leq  \frac{\epsilon'^2}{|D_{i,j, \ell}|}\,.
\end{align*}

Note that if $j$ is greater than one, the $\ell_2$-distance between 
$(q_\alpha)_{|D_{i,j, \ell}}$ and the uniform distribution is bounded by $\epsilon'/\sqrt{|D_{i,j, \ell}|}$ as well.  
Therefore, if $p_{|D_{i,j, \ell}}$ is close uniform distribution, it has to be close to $(q_\alpha)_{|D_{i,j, \ell}}$ as well. That is,
\begin{align*}
\|p_{|D_{i,j, \ell}} - (q_\alpha)_{|D_{i,j, \ell}}\|_1 & \leq \|p_{|D_{i,j, \ell}} - \UU_{|D_{i,j, \ell}}\|_1 + \|\UU_{|D_{i,j, \ell}} - (q_\alpha)_{|D_{i,j, \ell}}\|_1 
\\& \leq \sqrt {|D_{i,j, \ell}|} \left(\|p_{|D_{i,j, \ell}} - \UU_{|D_{i,j, \ell}}\|_2 + \|\UU_{|D_{i,j, \ell}} - (q_\alpha)_{|D_{i,j, \ell}}\|_2 \right)
\leq 2.42 \epsilon'\,.
\end{align*}

Hence, given the discussion above there are three possibilities for $D_{i,j, \ell}$. (i) $p(D_{i,j, \ell})$ is at most $\epsilon'/(k.\vv)$. Since $\ell_1$-distance is at most $2$, the total contribution of these sets in the sum below is at most $2\epsilon'$. (ii) $q_\alpha(D_{i,j, \ell})$ is at most $\epsilon'^2 |D_{i,j, \ell}|/n$, so the total contribution of these sets is at most $2 \epsilon'^2$. (iii) $\|p_{|D_{i,j, \ell}} - (q_\alpha)_{|D_{i,j, \ell}}\|_1$ is at most $2.42 \, \epsilon'$.
$$
\|p_{|D_{i,j, \ell}} - (q_\alpha)_{|D_{i,j, \ell}}\|_1 \cdot \min\left(p(D_{i,j, \ell}), q_\alpha (D_{i,j, \ell}) \right) \leq 6.42\,\epsilon' \,.
$$
Hence, the proof is complete. 
\end{proof}

\lemkFlatFuncTodist*
\begin{proof}
First, consider a degenerate case. If $\alpha = 0$, then the claim is trivially true by the triangle inequality: 
$ \|p - q\|_1 \leq \|p - \hat p \|_1 + \|\hat p - q \|_1 \leq 3\epsilon'\,.$
Thus, assume $\alpha > 0$.

For now, consider the case that there exists $x$ such that $f(x)$ is not zero, so $\sum_x f(x)$ is greater than zero.  First, we show that since $\hat p$ is close to the mixture of $q$ and $f$, the sum of the $f(x)$'s has to be close to one. That is, 
\begin{equation}\label{eq:1-f_bounded}
\begin{gathered}
    -2\,\epsilon' \leq \sum_x \hat p(x) - ((1-\alpha)\,q(x) - \alpha \, f(x)) \leq 2\,\epsilon' \quad \Rightarrow
\\
    -2\,\epsilon' \leq 1 - (1-\alpha) - \alpha \cdot \sum_x  f(x)) \leq 2\,\epsilon' \quad \Rightarrow
\\
    -\frac{2\,\epsilon'}{\alpha} \leq 1 -  \sum_x  f(x) \leq \frac{2\,\epsilon'}{\alpha} \quad \Rightarrow \quad \left|1 -  \sum_x  f(x)\right| \leq \frac{2\,\epsilon'}{\alpha}\,.
\end{gathered}
\end{equation}
We define $r:[n]\rightarrow [0,1]$ to be the normalization of $f$ for which $r(x) = f(x) / \sum_x f(x)$ for all $x$ in the domain. If $f$ is a $k$-flat function, then $r$ will be a $k$-flat distribution. Now, we show that the mixture of $q$ and $f$ is close to the mixture of $q$ and $r$ with mixture parameter $\alpha$.
\begin{align*}
    \|(1-\alpha) \, q + \alpha \, f &  - (1-\alpha) \, q + \alpha \, r\|_1 = \sum_x \left|(1-\alpha) \, q(x) + \alpha \, f(x)  - (1-\alpha) \, q(x) + \alpha \, r(x)\right|
\\ 
    &  = \alpha \cdot \sum_x \left|f(x) - r(x)\right| = \alpha \cdot \sum_x  f(x) \cdot \left|1 - \frac{1}{\sum_y f(y)}\right|
\\
    &  = \alpha \cdot \left|\sum_x  f(x)  - 1 \right| \leq 2\,\epsilon'\,,
\end{align*}
where the last inequality is due to Equation~\ref{eq:1-f_bounded}. Moreover, by the triangle inequality, we have:
$$\|\hat p - ((1-\alpha) \, q + \alpha \, r)\|_1 \leq 
\|\hat p - ((1-\alpha) \, q + \alpha \, f)\|_1 + \|(1-\alpha) \, q + \alpha \, f  - (1-\alpha) \, q + \alpha \, r\|_1 \leq 4\,\epsilon'\,.
$$

Now, assume $f(x)$ is zero for all $x$ in $[n]$. We show that if we set $r$ to be the uniform distribution over $[n]$, the same result holds. First, observe that the uniform distribution is a $k$-flat distribution for any $k \geq 1$. Then we show that $(1-\alpha)\,q + \alpha r$ is $4\epsilon'$-close to $p'$. Since $r(x) = 1/n$ for all $x$ in $[n]$, 
\begin{align*}
     \|\hat p - ((1-\alpha) \, q + \alpha \, r)\|_1 \leq \|\hat p - ((1-\alpha) \, q )\|_1 + \alpha \,.
\end{align*}
On the other hand, since $\hat p$ is $2\epsilon'$-close to $(1-\alpha)\,q$, one can show that $\alpha$ is at most $\epsilon'$.
\begin{align*}
    2\,\epsilon' \geq \|\hat p - ((1-\alpha) \, q + \alpha \, f)\|_1 \geq \sum_x \hat p(x) - (1-\alpha)\,q(x) = 1 - 1 + \alpha = \alpha\,.
\end{align*}
Therefore, whether $\sum_x f(x)$ is zero or not, there exists a $k$-flat distribution, $r$ for which
$\|\hat p - ((1-\alpha) \, q + \alpha \, r)\|_1$ is at most $4\,\epsilon'$.
Since $p'$ is $\epsilon'$-close to $p$, and by triangle inequality, we have:
$$ \|p - ((1-\alpha) \, q + \alpha \, r)\|_1 \leq 5 \epsilon' \,,
$$
which concludes the proof.
\end{proof}

\section{Lower bounds}
\label{sec:lb}

In this section, we present lower bounds for testing mixtures in different settings discussed earlier. 

\LBBigness
\begin{proof}
We prove by showing a reduction from mixture testing to testing bigness property of distributions. A distribution  called \emph{$T$-big} if the probability of any domain element is at least $T$~\cite{aliakbarpourGPRY18}. In addition, they showed there exist two constant parameters $\epsilon$ and $\beta$ and two family of distributions, namely $\FF^+$ and $\FF^-$, such that the following is true
\begin{itemize}
    \item All distribution in $\FF^+$ are $1/(\beta n)$-big. 
    \item All distribution in $\FF^-$ are $\epsilon$-far from being $1/(\beta n)$-big. Moreover, all the probability of each element according to the distributions is either zero or at least $1/(\beta n)$.
    \item Using $o(n/\log n)$ samples from a distribution in the families, no algorithm can distinguish whether the distribution was from $\FF^+$ or $\FF^-$ with probability at least $2/3$.
\end{itemize}

Let $\epsilon = 1/\beta$.  We show that any algorithm that can test mixtures as described in theorem, can distinguish $\FF^+$ and $\FF^-$ with high probability. 

First, we show that for any $1/(\beta n)$-big distribution, denoted by $p^+$, there exists  distribution $\eta$ such that $p^+$ is a mixture of $\eta$ and uniform distribution, meaning $p^+ =  \alpha \, \eta + (1-\alpha)\, \UU$ for $\alpha = 1/\beta$. Let $\eta$ assign the following probability to the $i$th element of the domain:
$$
\eta(i) = \frac{p(i) - \frac 1{\beta n}}{1 - \frac 1 \beta} \,.
$$
It is not hard to see that $\eta$ as defined above is a probability distribution. Since $p$ is $1/(\beta n)$-big distribution, all the $p(i)$ are at least $1/(\beta n)$, so all the $\eta(i)$'s are non-negative. Also, $\sum_i \eta(i)= 1$.  Clearly, $p^+$ is a mixture in the form $\alpha \, \eta + (1-\alpha)\, \UU$ for $\alpha = 1/\beta = \epsilon$.

Note that for any distribution $p^-$ in $\FF^-$ there is at least one element (in fact many elements) that has probability zero. Otherwise, all elements would have probability at least $1/(\beta n)$, and the distribution would be big. On the other hand, any distribution that is mixed with uniform with parameter $\alpha < 1$ cannot have any zero probability element. Thus, $p^-$ is not a mixture of the form $\alpha \, \eta + (1-\alpha)\, \UU$ when $\alpha \neq 1$.

Thus, any algorithm that can test mixture property as defined in the theorem has to accept $p^+$ and reject $p^-$. However, we know this is not possible unless the algorithms gets $\Omega(n/\log n)$ samples. This completes the proof. 
\end{proof}



\begin{proposition} \label{prop:lb_unknown_q_U}
When we have sample access to $q$ and $p$, any closeness tester in the presence of uniform noise $\Omega\left(\max\left( n^{2/3}/\epsilon^{4/3}, \sqrt n/ \epsilon^2\right)\right)$ samples.
\end{proposition}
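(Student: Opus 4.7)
The plan is to reduce ordinary closeness testing (on half the domain) to mixture testing in the presence of uniform noise, preserving the sample complexity lower bound up to constants. Concretely, I take a pair $(p^{\sharp}, q^{\sharp})$ from the standard closeness-testing hard instance of Chan--Diakonikolas--Valiant--Valiant~\cite{ChanDVV14} on the domain $[n/2]$ with proximity parameter $3\epsilon$, which requires $\Omega((n/2)^{2/3}/\epsilon^{4/3} + \sqrt{n/2}/\epsilon^2) = \Omega(n^{2/3}/\epsilon^{4/3} + \sqrt{n}/\epsilon^2)$ samples to distinguish. Embed these into $[n]$ by setting $p(i) = p^{\sharp}(i)$ and $q(i) = q^{\sharp}(i)$ for $i \in [n/2]$ and $p(i)=q(i)=0$ on $[n/2+1,n]$, and feed $p,q$ to the hypothetical mixture tester with noise $q_2 = \UU_n$ and parameter $\epsilon$.

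For completeness, if $p^{\sharp} = q^{\sharp}$ then $p = q = (1-0)\,q + 0 \cdot \UU_n$ is a trivial mixture with $\alpha=0$, so the tester must accept. The key technical step is soundness: assuming $\|p - q\|_1 \geq 3\epsilon$, I must show $\|p - m_\alpha\|_1 \geq \epsilon$ for every $\alpha \in [0,1]$, where $m_\alpha \coloneqq (1-\alpha)q + \alpha\UU_n$. I plan to use two complementary bounds. First, since $p(i)=0$ while $m_\alpha(i) = \alpha/n$ on $[n/2+1,n]$, restricting to this half yields
\[
\|p - m_\alpha\|_1 \;\geq\; \sum_{i=n/2+1}^{n} \frac{\alpha}{n} \;=\; \frac{\alpha}{2}.
\]
Second, restricting to $[n/2]$ and using the reverse triangle inequality,
\[
\sum_{i \in [n/2]} |p(i) - (1-\alpha)q(i) - \alpha/n| \;\geq\; \|p-q\|_1 - \alpha \sum_{i \in [n/2]} |q(i) - 1/n| \;\geq\; \|p-q\|_1 - \tfrac{3\alpha}{2},
\]
where I use the crude bound $\sum_{i \in [n/2]} |q(i)-1/n| \leq \sum_{i \in [n/2]}(q(i)+1/n) = 3/2$ since $q$ is supported on $[n/2]$. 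Combining these two contributions gives $\|p - m_\alpha\|_1 \geq \|p-q\|_1 - \alpha$, and together with the first bound we obtain $\|p - m_\alpha\|_1 \geq \max(\alpha/2,\;\|p-q\|_1 - \alpha) \geq \|p-q\|_1/3 \geq \epsilon$ uniformly in $\alpha$.

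The main obstacle, and the reason for embedding into only half the domain, is that for large $\alpha$ the mixture $m_\alpha$ approaches $\UU_n$, so if $p$ were also close to uniform one could have $p$ arbitrarily close to some $m_\alpha$ despite $\|p-q\|_1$ being large. Forcing $p$ to have zero mass on $[n/2+1,n]$ guarantees $p$ is bounded away from uniform, which is exactly what produces the $\alpha/2$ term and closes the soundness argument. Once both completeness and soundness are established, any mixture tester solves the embedded closeness-testing instance, so its sample complexity is at least that of closeness testing on $[n/2]$ with parameter $3\epsilon$, yielding the claimed $\Omega\bigl(\max(n^{2/3}/\epsilon^{4/3},\;\sqrt{n}/\epsilon^2)\bigr)$ bound.
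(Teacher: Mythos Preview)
Your proposal is correct, and the soundness argument is clean: the zero-mass half of the domain forces $\|p - m_\alpha\|_1 \geq \alpha/2$, while the triangle inequality on the other half gives $\|p - m_\alpha\|_1 \geq \|p-q\|_1 - \alpha$, and maximizing over the two yields $\|p-q\|_1/3 \geq \epsilon$ uniformly in $\alpha$.

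The approach, however, differs from the paper's. The paper handles the two terms separately: it gets $\Omega(\sqrt{n}/\epsilon^2)$ by the trivial reduction from uniformity testing (take $q = \UU_n$), and for $\Omega(n^{2/3}/\epsilon^{4/3})$ it \emph{opens up} the specific CDVV hard instance $p^* = b\,\ind_A + a\,\ind_B$, $q^* = b\,\ind_A + a\,\ind_C$ on disjoint sets and computes directly that $\|p^* - ((1-\alpha)q^* + \alpha\UU_n)\|_1 \geq \epsilon$ for every $\alpha$, using only the contributions from $B$ and $C$. Your argument is more modular: it treats the closeness-testing lower bound as a black box and works for \emph{any} far pair $(p^\sharp, q^\sharp)$, at the cost of losing a factor of $3$ in $\epsilon$ and $2$ in the domain size. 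The paper's version avoids those constant losses and needs no embedding trick, but relies on the explicit structure of the hard instance. Both are short; yours is more robust to changes in the underlying hard distribution, while the paper's is more self-contained.
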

\begin{proof}\; 
First, note that one can reduce testing uniformity to this problem by setting $q$ equal to the uniform distribution.  Therefore, it requires at least $\Omega(\sqrt n / \epsilon^2)$ samples by the lower bound for uniformity testing shown in  \cite{Paninski:08}. 

Now, we establish that  $\Omega(n^{2/3}/\epsilon^{4/3})$ many samples is also required. Without loss of generality, assume $\epsilon \geq 4^{3/4} / n^{1/4}$. Otherwise $\sqrt n / \epsilon^2$ would be the dominating term in the lower bound up to a constant factor. To prove the lower bound, we use two distributions (and any random relabeling of them) used in proving lower bounds for testing closeness of distributions  \cite{BFRSW, VV11, ValiantV14, ChanDVV14}. 
More precisely, we define two distributions $p^*$ and $q^*$ such that distinguishing $(p^*, q^*)$ and $(q^*, q^*)$ (and any random relabeling of them) requires $\Omega(n^{2/3}/\epsilon^{4/3})$ samples. On the other hand, we show that any $\Omega (q^*, \UU, \epsilon)$-mixture tester has to distinguish $(p^*, q^*)$ and $(q^*, q^*)$. Thus, the statement of the proposition is concluded. 

Let $a = 4\epsilon/n$ and $b = \epsilon^{4/3} / n ^{2/3} $. Consider three disjoint subset of domain elements $[n]$, namely $A$, $B$, and $C$ each of size $(1-\epsilon)/b$, $\epsilon/a$, and $\epsilon/a$ respectively. Let $p$ and $q$ be the following distributions:
$$p^* = b \ind_A + a \ind_B, \quad \quad q^* = b \ind_A + a \ind_C.$$
Note that $p^*$ is $\epsilon$-far from any mixture distribution of $q^*$ and $\UU$ with parameter $\alpha \in [0, 1]$, since
\begin{align*}
\|p^* -  q^*_\alpha\|_1 & = \sum\limits_{i} |p^*(i) - (1-\alpha)q^*(i) - \alpha/n | 
\\
& \geq \sum\limits_{i \in B} |a - \alpha/n |   + \sum\limits_{i \in C} |-(1 - \alpha)a-\alpha/n|
\\
& \geq \frac{n}{4} \left( |a - \alpha/n | + |(1 - \alpha)a + \alpha/n| \right) 
\\
& \geq  \frac{n}{4} \cdot a \geq \epsilon.
\end{align*}
Clearly, in the case where $p = q^*$ and $q = q^*$, $p$ is a mixture of $q$ and $\UU$ with mixture parameter $\alpha = 0$, and in the case where $p = p^*$ and $q = q^*$, $p$ is $\epsilon$-far from any mixture distribution of $q^*$ and $\UU$. Thus, a $(q, \UU, \epsilon)$-mixture tester has to distinguish between $(q^*, q^*)$ and $(p^*, q^*)$. By proposition 4.1 in \cite{ChanDVV14}, we know that this task requires $\Omega(n^{2/3}/\epsilon^{4/3})$ samples.
\end{proof}

\bibliographystyle{alpha}
\bibliography{main.bib}

\end{document}